\documentclass{amsart}

\usepackage{amsmath,amsfonts,amssymb,amsthm,epsfig,amscd,}
\usepackage[]{fontenc}
\usepackage{enumerate}
\usepackage[cmtip,all]{xy}
\usepackage{tabularx,multirow}

\usepackage[]{hyperref}
\usepackage[capitalise]{cleveref}
\usepackage{braket}
\usepackage{colonequals}

\usepackage{tikz}
\usepackage{tikz-cd}
\usetikzlibrary{calc,intersections,decorations.pathmorphing}

\usepackage{microtype}

\newtheorem{theorem}{Theorem}[section]
\newtheorem{conjecture}[theorem]{Conjecture}
\newtheorem{corollary}[theorem]{Corollary}
\newtheorem{lemma}[theorem]{Lemma}
\newtheorem{proposition}[theorem]{Proposition}

\newtheorem{alphatheorem}{Theorem}

\newtheorem{alphacorollary}[alphatheorem]{Corollary}
\newtheorem{alphaquestion}[alphatheorem]{Question}

\theoremstyle{definition}
\newtheorem{definition}[theorem]{Definition}

\theoremstyle{remark}
\newtheorem{remark}[theorem]{Remark}

\numberwithin{equation}{section}

\mathchardef
\mhyphen="2D

\newcommand\dash{\nobreakdash-\hspace{0pt}}
\newcommand\hilbn[2]{\ensuremath{#2^{[#1]}}} 
\newcommand\bounded{\ensuremath{\mathrm{b}}}
\newcommand\pg{\ensuremath{p_{\mathrm{g}}}}
\newcommand\qq{\ensuremath{q}}

\DeclareMathOperator\AJ{AJ}
\DeclareMathOperator\Bl{Bl}
\DeclareMathOperator\Bs{Bs}
\DeclareMathOperator\derived{\mathbf{D}}
\DeclareMathOperator\gon{gon}
\DeclareMathOperator\HH{H}
\DeclareMathOperator\hh{h}
\DeclareMathOperator\Hom{Hom}
\DeclareMathOperator\Jac{Jac}

\DeclareMathOperator\Perf{Perf}
\DeclareMathOperator\Pic{Pic}
\DeclareMathOperator\rank{rk}
\DeclareMathOperator\Supp{Supp}
\DeclareMathOperator\symmetric{\mathfrak{S}} 
\DeclareMathOperator\Sym{Sym}

\DeclareMathOperator\ntSOD{ntSOD}

\newcommand{\cB}{\ensuremath{\mathcal{B}}}

\newcommand{\cO}{\ensuremath{\mathcal{O}}}

\newcommand{\cX}{\ensuremath{\mathcal{X}}}

\newcommand{\bP}{\ensuremath{\mathbb{P}}}

\newcommand{\bfk}{\mathbf{k}}

\begin{document}

\title{Indecomposability of derived categories in families}
\author[F. Bastianelli]{Francesco Bastianelli}
\address{Università degli Studi di Bari Aldo Moro, Via Edoardo Orabona 4, 70125 Bari, Italy}
\curraddr{}
\email{francesco.bastianelli@uniba.it}
\thanks{F.B.~was partially supported by INdAM (GNSAGA)}

\author[P. Belmans]{Pieter Belmans}
\address{
  Mathematical Institute,
  Utrecht University,
  Budapestlaan 6,
  3584 CD Utrecht,
  The Netherlands
}
\curraddr{}
\email{p.belmans@uu.nl}
\thanks{}

\author[S. Okawa]{Shinnosuke Okawa}
\address{Department of Mathematics, Graduate School of Science, Osaka University, Machi\-kaneyama 1-1, Toyonaka, Osaka 560-0043, Japan}
\curraddr{}
\email{okawa@math.sci.osaka-u.ac.jp}
\thanks{S.O.~was partially supported by Grants-in-Aid for Scientific Research (16H05994, 16H02141, 16H06337, 18H01120, 20H01797) and the Inamori Foundation.}

\author[A. T. Ricolfi]{Andrea T.~Ricolfi}
\address{Scuola Internazionale Superiore di Studi Avanzati (SISSA), Via Bonomea 265, 34136 Trieste, Italy}
\curraddr{}
\email{aricolfi@sissa.it}
\thanks{}

\date{}

\dedicatory{}

\begin{abstract}
  Using the moduli space of semiorthogonal decompositions in a smooth projective family,
  introduced by the second, the third and the fourth author,
  we propose a novel approach to indecomposability questions for derived categories.
  Modulo a natural conjecture on the structure of the moduli space,
  we give both general results,
  and discuss interesting explicit examples of the behaviour of indecomposability in families,
  by relating it to the behaviour of the canonical base locus in families.
  These examples are symmetric powers of curves,
  certain regular surfaces of general type with large canonical base locus,
  and Hilbert schemes of points on surfaces.
  Indecomposability for symmetric powers of curves has been settled via other means,
  the other cases remain open
  and we expect that our analysis of the base locus
  will prove instrumental in finding unconditional proofs.
\end{abstract}

\maketitle

\section{Introduction}
One of the important tools in the study of derived categories of coherent sheaves on smooth projective varieties
is the notion of a semiorthogonal decomposition, which allows one to decompose the derived category into smaller pieces.
Kuznetsov's ICM addressess \cite{MR3728631,MR4680279} are excellent surveys of the topic.


In this paper we study which smooth projective varieties are ``atomic'',
in the sense that their derived categories cannot be decomposed.
To understand the importance of indecomposability, the relevant points are that

\begin{enumerate}
  \item
    \label{item_1}
    semiorthogonal decompositions often reflect properties of the geometry of the variety,
    e.g.,~by relating the variety to other, easier varieties,
    or suggesting it is rational, see, e.g.,~\cite{MR3727501,MR2605171};
  \item
    \label{item_2}
    operations in the minimal model program are conjectured to give semiorthogonal decompositions,
    where the guiding principle is the DK-hypothesis,
    which predicts that K-equivalent varieties have equivalent derived categories,
    and that a K-inequality between varieties exhibits one derived category
    as an admissible subcategory of the other, see e.g.~\cite{MR1949787,MR2483950}.
\end{enumerate}
From the general picture in point \eqref{item_2} the derived category being indecomposable
is conjectured to imply minimality in the sense of the minimal model program.
But there exist several minimal varieties whose derived categories have a non-trivial semiorthogonal decomposition.
Hence it is an important question to understand when derived categories of minimal varieties are indecomposable.
In this paper we initiate the study of this indecomposability question
by looking at its (expected) behaviour in families,
both in general,
and in explicit examples.

The first two instances where indecomposability was known are:
\begin{itemize}
  \item varieties with trivial canonical bundle, by using the arguments in \cite{MR1651025};
  \item curves of genus~$\geq 2$, by \cite[Theorem~1.1]{MR2838062}.
\end{itemize}
In \cite{1508.00682v2} these results were generalised, and it was shown how the geometry of the base locus~$\Bs|\omega_X|$
of the canonical linear series --- we call it the \emph{canonical base locus} --- governs indecomposability,
because of its link to the Serre functor for~$\derived^\bounded(X)$.
In particular, it is shown that for a semiorthogonal decomposition~$\derived^\bounded(X)=\langle\mathcal{A},\mathcal{B}\rangle$,
either all objects of~$\mathcal{A}$ or all objects of~$\mathcal{B}$
are necessarily supported on~$\Bs|\omega_X|$.
One of the main consequences is the following.

\begin{theorem}[Kawatani--Okawa]
  \label{theorem:kawatani-okawa}
  Let~$X$ be a smooth projective variety.
  If for all connected components~$Z$ of~$\Bs|\omega_X|$
  there exists a Zariski open subset~$U\subseteq X$ containing~$Z$
  such that~$\omega_X|_U\cong\mathcal{O}_U$,
  then~$\derived^\bounded(X)$ is indecomposable.

  In particular, if~$\dim\Bs|\omega_X|\leq 0$ then~$\derived^\bounded(X)$ is indecomposable.
\end{theorem}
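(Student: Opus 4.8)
The plan is to argue by contradiction, taking as the main input the structural result recalled just above the statement. Suppose $\derived^\bounded(X)=\langle\cA,\cB\rangle$ is a nontrivial semiorthogonal decomposition, so that $\operatorname{RHom}(B,A)=0$ for all $A\in\cA$ and $B\in\cB$ (the vanishing holds in every degree, since $\cA$ and $\cB$ are closed under shifts). By the cited result one of the two factors has all of its objects supported on $\Bs|\omega_X|$; I treat the case in which this factor is $\cB$, the other case being entirely analogous. If $\Bs|\omega_X|=\emptyset$ this already forces $\cB=0$, contradicting nontriviality; hence we may assume $\Bs|\omega_X|\neq\emptyset$, and in particular $\cO_X\notin\cB$ since $\Supp\cO_X=X$.

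First I would show that the Serre functor $S_X=(-)\otimes\omega_X[\dim X]$ acts on $\cB$ as the shift $[\dim X]$. Fix $B\in\cB$. Its support is a closed subset of $\Bs|\omega_X|$; decomposing $B$ along the (finitely many) connected components of its support, we may assume $\Supp B$ is connected and therefore contained in a single connected component $Z$ of $\Bs|\omega_X|$, hence in the Zariski open $U$ on which $\omega_X|_{U}\cong\cO_{U}$ by hypothesis. Tensoring an object supported inside $U$ by a line bundle that is trivial on $U$ returns the object, so $B\otimes\omega_X\cong B$ and consequently $S_X(B)\cong B[\dim X]$. This is the step in which the geometric hypothesis is used, and I expect the main obstacle of the argument to lie precisely here: namely, making the isomorphism $B\otimes\omega_X\cong B$ precise for arbitrary complexes (not just sheaves) supported on $\Bs|\omega_X|$, together with the reduction to a single component of the support.

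Granting this, Serre duality yields, for all $A\in\cA$ and $B\in\cB$,
\begin{equation}
  \operatorname{RHom}(A,B)\;\cong\;\operatorname{RHom}(A,S_X B)[-\dim X]\;\cong\;\operatorname{RHom}(B,A)^{\vee}[-\dim X]\;=\;0,
\end{equation}
so that $\cA$ and $\cB$ are completely orthogonal and $\derived^\bounded(X)=\cA\oplus\cB$ is an orthogonal decomposition.

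Finally I would rule this out using connectedness of $X$. Since $\operatorname{End}(\cO_X)=\HH^0(X,\cO_X)=\bfk$ has no nontrivial idempotents, $\cO_X$ is indecomposable and lies entirely in one factor, necessarily $\cA$ as $\cO_X\notin\cB$. For every closed point $x$ one has $\Hom(\cO_X,k(x))=\bfk\neq 0$, so complete orthogonality forces every skyscraper $k(x)$ into $\cA$ as well. As $\{k(x)\}_{x\in X}$ is a spanning class of $\derived^\bounded(X)$, any $B\in\cB$ satisfies $\operatorname{RHom}(k(x),B)=0$ for all $x$ and hence $B=0$; thus $\cB=0$, the desired contradiction, and $\derived^\bounded(X)$ is indecomposable. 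The final assertion then follows at once: if $\dim\Bs|\omega_X|\le 0$ then $\Bs|\omega_X|$ is either empty or a finite set of points, so each of its connected components is a single point, and a line bundle is trivial on a suitable Zariski-open neighbourhood of any point; hence the hypothesis of the first part is satisfied.
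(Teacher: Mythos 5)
You should first know that the paper does not actually prove this theorem: it is imported, with attribution, from Kawatani--Okawa \cite{1508.00682v2}, and the only ingredient the paper records is exactly the structural fact you take as input, namely that for any semiorthogonal decomposition $\derived^\bounded(X)=\langle\cA,\cB\rangle$ all objects of one of the two factors are supported on $\Bs|\omega_X|$. So the comparison is necessarily with the original argument of \cite{1508.00682v2}, and your reconstruction follows it in substance: the Serre functor acts as the shift $[\dim X]$ on the supported factor, semiorthogonality plus Serre duality upgrades the decomposition to a completely orthogonal one, and a connectedness/spanning-class argument then kills one factor. Your Serre-duality computation and the final reduction of the ``in particular'' clause are correct.

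The step you flagged as the main obstacle is indeed where the technical content lies, and it is closed by two standard lemmas: (a) an object of $\derived^\bounded(X)$ whose support is a disjoint union of closed subsets splits as a direct sum of objects supported on the pieces, and the factors of a semiorthogonal decomposition are thick (closed under direct summands), so your summands remain in $\cB$; (b) for a closed subset $Z$ of $X$ contained in an open $U$, restriction induces an equivalence between the full subcategories of objects of $\derived^\bounded(X)$ and of $\derived^\bounded(U)$ supported on $Z$, under which $-\otimes\omega_X$ is identified with $-\otimes\omega_X|_U\cong\operatorname{id}$; this yields $B\otimes\omega_X\cong B$ for arbitrary complexes, not just sheaves. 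Two smaller points deserve a line each. First, your claim that $\cO_X\notin\cB$ ``since $\Supp\cO_X=X$'' presupposes $\Bs|\omega_X|\neq X$; this is not automatic (it fails exactly when $\HH^0(X,\omega_X)=0$), but it does follow from the hypothesis: if $\Bs|\omega_X|=X$, then applying the hypothesis to the component $Z=X$ gives $\omega_X\cong\cO_X$, whence $\Bs|\omega_X|=\emptyset$, a contradiction. Second, forcing each skyscraper $k(x)$ into $\cA$ uses that $k(x)$ is indecomposable (which holds since $\Hom(k(x),k(x))=\bfk$), so that the splitting provided by complete orthogonality places it entirely in one factor. With these points made explicit, your proof is complete.
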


To bootstrap from this result
we will consider the moduli space of semiorthogonal decompositions
in a smooth projective family~$\mathcal{X}\to T$,
introduced in \cite{2002.03303v2},
and relate it to the behaviour of the canonical base locus in families.
The main result of op.~cit.~is that this moduli functor of semiorthogonal decompositions
is an algebraic space,
which is moreover \'etale over~$T$.

For our applications we will consider the subfunctor
parameterizing nontrivial semiorthogonal decompositions,
as in~\S8.4 of~op.~cit.
Assuming \cite[Conjecture~8.28]{2002.03303v2} (see also \cref{conjecture:ntsod})
the latter is again an algebraic space \'etale over~$T$,
and thus as explained by \cref{theorem:Zariski open}
the locus in~$T$ where~$\derived^\bounded(f^{-1}(t))$ admits a non-trivial semiorthogonal decomposition
is Zariski-open.

This is a natural conjecture,
with the unobstructedness of deformations of exceptional collections \cite{MR3050709}
as the main evidence.
The indecomposability results in this paper are conditional on this conjecture.
We will recall the setup and context in \Cref{section:indecomposability},
where we will also prove the following.

\begin{alphatheorem}
  \label{theorem:indecomposability}
  Let~$f\colon\mathcal{X}\to T$ be a smooth projective morphism, where~$T$ is an excellent scheme.
  If \cref{conjecture:ntsod} holds,
  and the subset
  \begin{equation}
    U\colonequals\bigl\{ t\in T \,\bigm|\, \text{$\derived^\bounded(f^{-1}(t))$ is indecomposable} \bigr\} \subseteq T
  \end{equation}
  is dense,
  then for all~$t\in T$ we have that~$\derived^\bounded(f^{-1}(t))$ is indecomposable.
\end{alphatheorem}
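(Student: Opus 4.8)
The plan is to recast the statement as an assertion about the moduli space of semiorthogonal decompositions and then reduce it to a single openness property. Write $\ntSOD_{\cX/T}$ for the moduli space of nontrivial semiorthogonal decompositions of the fibres of $f$, with structural morphism $\pi\colon\ntSOD_{\cX/T}\to T$. It is precisely at this point that excellence of $T$ enters: it is the hypothesis under which Artin's representability criteria apply and guarantee that $\ntSOD_{\cX/T}$ is an algebraic space locally of finite presentation over $T$. The defining feature I would exploit is that, for $t\in T$ and a field extension $K/\kappa(t)$, the $K$-points of the fibre $\pi^{-1}(t)$ are exactly the nontrivial semiorthogonal decompositions of $\derived^\bounded(f^{-1}(t)\times_{\kappa(t)}K)$. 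Granting that indecomposability is insensitive to extension of the base field — so that $t\in U$ if and only if $\pi^{-1}(t)=\emptyset$ — the complement of $U$ is exactly the image of $\pi$:
\begin{equation}
  T\setminus U=\pi\bigl(\ntSOD_{\cX/T}\bigr).
\end{equation}

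The theorem then reduces to one topological claim: the image of $\pi$ is open in $T$. Indeed, granting this, $U=T\setminus\pi(\ntSOD_{\cX/T})$ is closed; being dense by hypothesis, it must equal its closure, namely all of $T$. Thus the entire content is concentrated in the openness of $\pi(\ntSOD_{\cX/T})$, which is the same as saying that \emph{admitting} a semiorthogonal decomposition is an open condition on the fibres of a smooth projective family.

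To establish openness I would show that $\pi$ is itself an open morphism, and I expect this to come from the deformation theory of semiorthogonal decompositions: the projection functors of an admissible subcategory are governed by idempotent kernels, and such idempotent data should lift along square-zero extensions. This unobstructedness ought to make $\pi$ smooth (hence open), and in the rigid case even étale. Concretely, openness of the image amounts to two verifications. First, the image is locally constructible, which follows from $\pi$ being locally of finite presentation together with Noetherian approximation, where excellence of $T$ is again convenient. Second, the image is stable under generisation; this is the geometric heart. Given a specialisation $t_0\rightsquigarrow t_1$ with $\pi^{-1}(t_0)\neq\emptyset$, I would choose a trait $\Spec R\to T$, with $R$ a discrete valuation ring, sending the closed point to $t_0$ and the generic point to $t_1$, lift the semiorthogonal decomposition of the special fibre over the completion $\widehat{R}$ using the unobstructedness above, and algebraise to produce a semiorthogonal decomposition of a fibre lying over $t_1$. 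Since a constructible subset stable under generisation is open, the claim follows.

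The main obstacle is exactly this deformation-theoretic input — that a semiorthogonal decomposition of one fibre spreads out to the neighbouring fibres, i.e.\ that $\pi$ is open. Everything else is formal once $\ntSOD_{\cX/T}$ and its representability are in hand. I would therefore isolate the unobstructed lifting of semiorthogonal decompositions along infinitesimal extensions as the key lemma, and treat the passage from dense indecomposability to global indecomposability as a short consequence of openness combined with the density hypothesis.
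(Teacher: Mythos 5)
Your proposal takes essentially the same route as the paper: identify $T\setminus U$ with the image of the structural morphism $\ntSOD^2_f\to T$ of the moduli space of non-trivial semiorthogonal decompositions, observe that this image is open because the morphism is \'etale, and conclude that the dense set $U$ is also closed, hence equal to $T$. The only difference is that the deformation-theoretic "key lemma" you isolate (unobstructed lifting of semiorthogonal decompositions, making the morphism smooth/\'etale and its image open) is exactly the result the paper imports wholesale from the cited moduli-of-SODs paper, so your sketch reconstructs the cited input rather than giving a genuinely different argument.
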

In other words,
indecomposability of derived categories can be extended from general to special fibres of a family $f\colon\mathcal{X}\to T$ as above.

\medskip

To find instances where such a method would be applicable,
observe that in characteristic \( 0 \), thanks to the invariance of genera,
the dimension of the canonical base locus $\dim\Bs|\omega_{f^{-1}(t)}|$
is an upper-semicontinuous function $T\to \mathbb{N}\cup \{-\infty\}$ (cf. \Cref{proposition:canonical-base-locus-upper-semicontinuous}).
Therefore, if $\dim\Bs|\omega_{f^{-1}(t_0)}|\leq 0$ for some $t_0\in T$,
then $\dim\Bs|\omega_{f^{-1}(t)}|\leq 0$ for general $t\in T$,
and hence $\derived^\bounded(f^{-1}(t))$ is indecomposable by \Cref{theorem:kawatani-okawa}.
Combining this fact with \Cref{theorem:indecomposability} we then deduce the following.

\begin{alphacorollary}
  \label{corollary:zero-dim-Bs}
  Consider the situation from \Cref{theorem:indecomposability},
  and assume moreover that~~$T$ is irreducible and of characteristic~0.
  If there exists a point~$t_0\in T$ such that~$\dim\Bs|\omega_{f^{-1}(t_0)}|\leq 0$,
  then~$\derived^\bounded(f^{-1}(t))$ is indecomposable for all~$t\in T$.
\end{alphacorollary}

We note that the assumption on the characteristic of the ground field is essential.
In \Cref{remark:Fakhruddin}, we show that \Cref{corollary:zero-dim-Bs} does not hold in general in positive or mixed characteristic.
We prove \Cref{theorem:indecomposability} and \Cref{corollary:zero-dim-Bs} in \Cref{section:indecomposability}.

In the light of the latter result, it seems natural to raise the following question
on the invariance under deformation of indecomposability of derived categories.
\begin{alphaquestion}
  Let~$f\colon\mathcal{X}\to T$ be a smooth projective morphism, where~$T$ is irreducible,
  and suppose that~$\derived^\bounded(f^{-1}(t_0))$ is indecomposable for some~$t_0\in T$.
  Under which additional conditions we can ensure that $\derived^\bounded(f^{-1}(t))$ is indecomposable for all $t\in T$?
\end{alphaquestion}

\medskip

Now we turn to specific classes of complex projective varieties,
and explain how one can study the indecomposability of their derived categories
by applying \Cref{theorem:indecomposability} and \Cref{corollary:zero-dim-Bs}.

In \Cref{section:symmetric-products}, we consider $n$-fold symmetric products $\Sym^nC$
of smooth projective curves $C$ of genus $g\geq 3$,
and we describe the canonical base locus $\Bs|\omega_{\Sym^nC}|$
in relation to the gonality $\gon(C)$ of the curve $C$ (cf.~\Cref{proposition:curve-base-locus}).
The same result has been proved independently in \cite{1807.10702v1},
and it is used to show that $\derived^\bounded(\Sym^nC)$ is indecomposable for any $n<\gon(C)$
(see \cite[Theorem~1.2]{1807.10702v1}).
Using \Cref{theorem:indecomposability} and the fact that the gonality of the general curve is $\lfloor (g+3)/2\rfloor-1$,
we achieve the following strengthening of this indecomposability result,
see Proposition~\ref{proposition:symmetric-power-indecomposability}.
\begin{alphacorollary}
  \label{corollary:sym}
  Let $C$ be a smooth complex projective curve of genus $g\geq 2$
  and let $\Sym^nC$ be its $n$-fold symmetric product.
  Then
  assuming \cref{conjecture:ntsod}
  the derived category $\derived^\bounded(\Sym^nC)$ is indecomposable for any $1 \leq n\leq\lfloor\frac{g+3}{2}\rfloor-1$.
\end{alphacorollary}
However, our techniques do not apply to the range $\lfloor\frac{g+3}{2}\rfloor\leq n\leq g-1$
as then the canonical base locus $\Bs|\omega_{\Sym^nC}|$ is always positive-dimensional.
The expected indecomposability for the derived category for~$n\leq g-1$ was
obtained recently by Lin~\cite{2107.09564} using the \emph{paracanonical} base locus.

\medskip

In \Cref{section:surfaces} we examine two classes of surfaces of general type.
On one hand we consider \emph{Horikawa surfaces}, i.e.~those surfaces on, or near, the Noether line,
having~$\mathrm{c}_1^2=2\pg-k$ with~$k=2,3,4$,
which have been classified in \cite{MR1573789,MR0424831,MR0460340,MR0501370,MR0517773}.
In particular, we extend the analysis in \cite[Example~4.6]{1508.00682v2},
by summarising in \cref{appendix:horikawa} the behaviour of the canonical base locus in families.
Thus, assuming \cref{conjecture:ntsod}
\emph{all} derived categories of Horikawa surfaces are indecomposable (see \Cref{theorem:horikawa-indecomposability}).

We will also study a series of examples introduced by Ciliberto \cite{MR0778862}
for discussing properties of the canonical ring of surfaces of general type.
These surfaces are obtained as the minimal model of certain double planes,
and they all have positive-dimensional canonical base locus.
We explain how to construct families where the dimension of the base locus jumps
in \cref{theorem:ciliberto-family},
and for which indecomposability of the derived category is unconditional for most members of the family.
This degeneration shows,
assuming \cref{conjecture:ntsod},
that they all have indecomposable derived categories (see \cref{corollary:ciliberto-indecomposability}).


\medskip

Finally, in \Cref{section:hilbert-schemes} we study indecomposability of the derived category of Hilbert schemes $S^{[n]}$ of points on a surface $S$.
In particular, after showing that the emptiness of $\Bs|\omega_{S}|$
implies the emptiness of $\Bs|\omega_{S^{[n]}}|$ (see \Cref{proposition:canonical-base-locus-hilbert-schemes}),
we discuss the indecomposability of $\derived^\bounded(S^{[n]})$,
for some of the surfaces considered in \Cref{section:surfaces}.

\medskip

To conclude, let us point out the following question (stated as a conjecture in \cite{1807.10702v1}), where a positive answer would imply the indecomposability for all the examples studied in this paper. We point out that given a smooth projective variety~$X$, the nefness of~$\omega_X$ is nothing but the minimality in the sense of the minimal model program, and the existence of a global section of the canonical bundle is reminiscent of, but weaker than, the conditions in \cite{1508.00682v2}.

\begin{alphaquestion}
  \label{question:indecomposability}
  Let~$X$ be a smooth projective variety. Is it true that,
  if~$\omega_X$ is nef and $\HH^0(X,\omega_X)\neq 0$,
  then~$\derived^\bounded(X)$ is indecomposable?
\end{alphaquestion}

This question seems completely out of reach at the moment.
To see that the converse implication does not hold,
observe that bielliptic surfaces are examples of varieties where~$\omega_X$ is nef,
but~$\HH^0(X,\omega_X)=0$, and their derived categories are indecomposable by \cite[Proposition~4.1]{1508.00682v2}.
Higher-dimensional examples can be found amongst hyperelliptic varieties \cite{2411.14814}.


\section{Extending indecomposability}
\label{section:indecomposability}
In this section we will discuss the proof of \Cref{theorem:indecomposability} and \Cref{corollary:zero-dim-Bs}.
First we recall the following.
\begin{definition}
  \label{definition:sod}
  Let~$\mathcal{T}$ be a triangulated category. A \emph{semiorthogonal decomposition of length~$n$} for~$\mathcal{T}$ is a sequence~$\mathcal{A}_1,\ldots,\mathcal{A}_n$ of full triangulated subcategories, such that
  \begin{description}
    \item[semiorthogonality] for all~$1\leq j<i\leq n$ and for all~$A_i\in\mathcal{A}_i$ and~$A_j\in\mathcal{A}_j$ we have
      \begin{equation}
        \Hom_{\mathcal{T}}(A_i,A_j)=0;
      \end{equation}

    \item[generation] the categories~$\mathcal{A}_1,\ldots,\mathcal{A}_n$ generate~$\mathcal{T}$, i.e.~the smallest triangulated subcategory of~$\mathcal{T}$ containing~$\mathcal{A}_1,\ldots,\mathcal{A}_n$ is~$\mathcal{T}$.
  \end{description}
  For a semiorthogonal decomposition of~$\mathcal{T}$
  we will use the notation
  \begin{equation}
    \mathcal{T}
    =
    \langle\mathcal{A}_1,\ldots,\mathcal{A}_n\rangle.
  \end{equation}
\end{definition}

We will only be concerned with semiorthogonal decompositions of the bounded derived category~$\derived^\bounded(X)$ of coherent sheaves on a smooth projective variety~$X$ (for which \cite{MR3728631} provides an excellent starting point), and~$T$\dash linear semiorthogonal decompositions of the category~$\Perf\cX$ of perfect complexes on the total space~$\cX$ of a smooth projective morphism~$f\colon\cX\to T$. The notion of~$T$\dash linearity is introduced in \cite[\S2.6]{MR2238172} and ensures that the semiorthogonal decompositions behave well in families.

\begin{definition}
  Let~$X$ be a smooth projective variety.
  We say that~$\derived^\bounded(X)$ is \emph{indecomposable}
  if there exist no non-trivial semiorthogonal decompositions,
  i.e.,~in \Cref{definition:sod} all but one of the subcategories are always zero.
\end{definition}

\textbf{Moduli spaces of semiorthogonal decompositions.} \quad
In \cite{2002.03303v2} a moduli space of semiorthogonal decompositions was constructed,
relative to a smooth projective family~$f\colon\cX\to T$. The main result \cite[Theorem~A]{2002.03303v2}, under the assumption that
\(
   T
\)
is an excellent scheme, is that there exists an \'etale algebraic space
\(
    \operatorname{SOD}_f ^{ 2 }
    \to
    T
\)
which classifies semiorthogonal decompositions of length~2 of the bounded derived category of coherent sheaves on the fibres of~$f$.

Because we are only interested in indecomposability results,
we want to restrict ourselves to~$\operatorname{ntSOD}_f^2$:
the subfunctor defined by
considering only non-trivial semiorthogonal decompositions of length~2,
as introduced in \cite[Definition~8.24]{2002.03303v2}.
By \cite[Proposition~8.33]{2002.03303v2}, the~$T'$\dash valued points for~$T'\to T$ are given by
\begin{equation}
  \operatorname{ntSOD}_f^2(T')
  \simeq
  \left\{
    \text{$\Perf\cX_{T'}=\langle\mathcal{A}_1,\mathcal{A}_2\rangle$ $T'$-linear}\left|\begin{array}{l}
    \mathcal{A}_i|_x\neq 0\textrm{ for any}\\ i=1,2 \textrm{ and } x\in T' \end{array}
  \right.\right\},
\end{equation}
where~$\mathcal{A}_i|_x$ denotes the base change of the subcategory to a point~$x\in T'$.

We shall work assuming the following conjecture, implied by \cite[Conjecture~8.28]{2002.03303v2}, which in turn would follow from \cite[Conjecture~8.29]{2002.03303v2}.

\begin{conjecture}
  \label{conjecture:ntsod}
  Let~$f\colon\mathcal{X}\to T$ be a smooth projective morphism, where~$T$ is an excellent scheme.
  Then~$\ntSOD_f^2$ is an algebraic space, which is \'etale over~$T$.
\end{conjecture}

For us, the relevant consequence of this conjecture is the following.
\begin{theorem}\label{theorem:Zariski open}
  Granting \cref{conjecture:ntsod}, and under the same assumptions, 
  the set of points~$t \in T$ for which~$\derived^{\bounded} ( f ^{ - 1 } ( t ) )$
  admits a non-trivial semiorthogonal decomposition is a (possibly empty) Zariski open subset of \( T \).
\end{theorem}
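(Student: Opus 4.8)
The plan is to exhibit the set in the statement as the image of the structure morphism $\pi\colon\ntSOD_f^2\to T$ and to deduce its openness directly from \Cref{theorem:moduli-space}. First I would reduce to length\dash$2$ decompositions: any non-trivial semiorthogonal decomposition of $\derived^\bounded(f^{-1}(t))$ can be coarsened into a non-trivial one of length~$2$ by splitting its ordered sequence of components into two blocks, each containing a non-zero component, and conversely a non-trivial length\dash$2$ decomposition is in particular non-trivial. Thus the set $V$ of points $t$ with $\derived^\bounded(f^{-1}(t))$ decomposable coincides with the set of $t$ admitting a non-trivial length\dash$2$ semiorthogonal decomposition. Since $f$ is smooth and projective, the fibre $f^{-1}(t)$ is smooth projective over $\kappa(t)$ and hence $\Perf(f^{-1}(t))=\derived^\bounded(f^{-1}(t))$, while $\Spec\kappa(t)$\dash linearity is automatic; so, specialising the description of the $T'$\dash valued points recalled above to $T'=\Spec\kappa(t)$, the $\kappa(t)$\dash rational points of $\ntSOD_f^2$ over $t$ are exactly the non-trivial length\dash$2$ decompositions of $\derived^\bounded(f^{-1}(t))$.

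The openness is then immediate: by \Cref{theorem:moduli-space} the morphism $\pi$ is \'etale, in particular flat and locally of finite presentation, hence open as a map of algebraic spaces, so its image $W\defeq\pi(\ntSOD_f^2)$ is a Zariski open subset of $T$. The inclusion $V\subseteq W$ follows from the previous paragraph, since a decomposition of $\derived^\bounded(f^{-1}(t))$ yields a $\kappa(t)$\dash rational point of $\ntSOD_f^2$ over $t$.

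The remaining, and main, difficulty is the reverse inclusion $W\subseteq V$, which is where the comparison between residue fields enters. A point $t\in W$ only guarantees that the fibre $\pi^{-1}(t)=\ntSOD_f^2\times_T\Spec\kappa(t)$ is non-empty; being \'etale over $\kappa(t)$ it is a finite disjoint union of spectra of finite separable extensions $L/\kappa(t)$, so an arbitrary point of it yields a priori only a non-trivial decomposition of $\derived^\bounded((f^{-1}(t))_L)$ rather than of $\derived^\bounded(f^{-1}(t))$ itself. Closing this gap amounts to producing a $\kappa(t)$\dash rational point in a non-empty fibre, and I would approach it through the $\mathrm{Gal}(\kappa(t)^{\mathrm{sep}}/\kappa(t))$\dash action on the finite \'etale fibre together with the descent property for $\kappa(t)$\dash linear decompositions that already underlies \Cref{theorem:moduli-space}. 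This descent step is the technical heart of the proof; it becomes vacuous when $\kappa(t)$ is separably closed --- for instance at the closed points of a complex family, which is the setting of the applications --- where every point of $\pi^{-1}(t)$ is rational and $W=V$ holds at once.
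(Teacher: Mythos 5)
Your strategy is exactly the paper's: the paper's entire proof consists of the observation that the set in question ``is nothing but the image of the \'etale morphism $\ntSOD^2_f\to T$, hence is Zariski open''. Your reduction to length-two decompositions, the identification of the $\kappa(t)$-rational points of $\ntSOD^2_f$ over $t$ with the non-trivial decompositions of $\derived^\bounded(f^{-1}(t))$, the openness of the image $W=\pi(\ntSOD^2_f)$, and the inclusion $V\subseteq W$ are all correct, and they reproduce the easy half of that one-line argument. But your proposal stops exactly where the statement begins: the inclusion $W\subseteq V$ is flagged as ``the technical heart'' and then left as a sketch. What you have actually proved is that the a priori larger set $W$ (fibres which become decomposable after a finite separable extension of the residue field) is open --- not that $V$ is.

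This gap is genuine, and the route you indicate for closing it would not work. The sheaf property underlying \Cref{theorem:moduli-space} gives precisely $\ntSOD^2_f(\kappa(t))=\ntSOD^2_f(L)^{\operatorname{Gal}(L/\kappa(t))}$ for $L/\kappa(t)$ Galois; but a non-empty set with a Galois action need not have fixed points (nothing prevents a fibre of $\pi$ from being $\Spec L$ itself), so descent alone produces nothing: one would have to manufacture a \emph{Galois-stable} semiorthogonal decomposition from an arbitrary one, and that is exactly the open difficulty. In fact no argument using only the formal inputs you invoke can establish $W\subseteq V$, because the inclusion is false without geometric connectedness of the fibres, a hypothesis your argument never uses (and which enters the paper only in \Cref{corollary:zero-dim-Bs}): take $f\colon E\to T=E/\langle P\rangle$ an \'etale double cover of elliptic curves over $\bQ$, with $P$ a rational $2$-torsion point and $E$ of positive Mordell--Weil rank. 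Every fibre splits, hence decomposes, after a quadratic extension, so $W=T$; but each inert fibre --- including the generic one --- is $\Spec$ of a field $L$, and $\Perf(L)$ is indecomposable since any non-zero component of a semiorthogonal decomposition is shift-closed and thick, hence contains $L$. Here $V$ is an infinite, dense, non-open set of closed points, so even the conclusion of the theorem fails in this generality. The identification of the decomposability locus with the image of $\ntSOD^2_f\to T$, which the paper's proof asserts and delegates to \cite[Theorem~8.33]{2002.03303v1} (of which \Cref{theorem:Zariski open} is a restatement), is therefore genuine mathematical content --- it needs both the geometric connectedness built into that reference's setup and a real descent argument --- and it is precisely what your proof is missing.
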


\begin{proof}
  The subset of interest is nothing but the image of the \'etale (and thus open) morphism~$\ntSOD^{2}_{f} \to T$, hence is Zariski open.
\end{proof}

Note that, in particular, we can take \( T \) to be a scheme locally of finite type over~$\mathbb{C}$.

We can now obtain the following short proof of the main theorem.
\begin{proof}[Proof of \Cref{theorem:indecomposability}]
  By \cref{theorem:Zariski open}, the subset~\(T \setminus U \subseteq T\) is open.
  Since it does not intersect the dense subset~\(U \subseteq T\),
  it has to be empty.
  Hence~\(U =T\), which is nothing but the assertion.
\end{proof}

\textbf{Canonical base loci in families.} \quad
The main goal of the remainder of the paper
is to give instances in which we can apply this result,
for which we need to exhibit a dense subset~$U\subset T$ parameterising fibres of $f\colon\cX\to T$
with indecomposable derived category.
In this direction, we use \Cref{theorem:kawatani-okawa} and the following result.
It is likely standard, but we have not found a reference, so we include a proof.

\begin{proposition}
  \label{proposition:canonical-base-locus-upper-semicontinuous}
  Let~$f\colon\cX\to T$ be a smooth and projective morphism of noetherian schemes defined in characteristic \(0\).
  Then the function
  \begin{equation}
    \varphi \colon T \to \mathbb{N}\cup\set{-\infty},
    \quad
    t \mapsto \varphi ( t ) \colonequals \dim \Bs | \omega_{ \cX _{ t } }|
  \end{equation}
  is upper semi-continuous.
\end{proposition}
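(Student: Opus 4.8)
The plan is to realise $\varphi(t)$ as the fibre dimension of a single closed subscheme of $\cX$---the \emph{relative canonical base locus}---and then to invoke upper semicontinuity of fibre dimension for a proper morphism. The characteristic-zero hypothesis will enter at exactly one point: guaranteeing that the formation of this relative base locus is compatible with restriction to fibres.

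First I would establish that $\cE\defeq f_*\omega_{\cX/T}$ is locally free and that its formation commutes with arbitrary base change. As $f$ is smooth and projective of some (locally constant) relative dimension $n$, the relative canonical sheaf $\omega_{\cX/T}\cong\Omega^n_{\cX/T}$ is a line bundle restricting to $\omega_{\cX_t}$ on each fibre, so $\hh^0(\cX_t,\omega_{\cX_t})=\hh^0(\cX_t,\Omega^n_{\cX_t})=\hh^{n,0}(\cX_t)$ is a Hodge number. In characteristic zero the relative Hodge--de Rham spectral sequence degenerates and the Hodge sheaves $R^qf_*\Omega^p_{\cX/T}$ are locally free with formation compatible with base change; taking $(p,q)=(n,0)$ yields at once that $\cE$ is locally free and that the natural map $\cE\otimes\kappa(t)\to\HH^0(\cX_t,\omega_{\cX_t})$ is an isomorphism for every $t\in T$. (When $T$ is reduced one may instead combine the constancy of $\pg=\hh^{n,0}$ with Grauert's base change theorem.)

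Next I would form the evaluation morphism $\mathrm{ev}\colon f^*\cE\to\omega_{\cX/T}$ and set $\cB\defeq\Supp(\operatorname{coker}\mathrm{ev})$, a closed subscheme of $\cX$. Since the formation of a cokernel commutes with the base change $\cX_t\hookrightarrow\cX$, and since $\cE$ commutes with base change by the previous step, the restriction of $\mathrm{ev}$ to $\cX_t$ is canonically the evaluation map $\HH^0(\cX_t,\omega_{\cX_t})\otimes\cO_{\cX_t}\to\omega_{\cX_t}$. Its cokernel is supported exactly on the common vanishing locus of the global sections, namely $\Bs|\omega_{\cX_t}|$; hence $\cB\cap\cX_t=\Bs|\omega_{\cX_t}|$ as sets and $\varphi(t)=\dim(\cB\cap\cX_t)$ is the dimension of the fibre over $t$ of $g\defeq f|_{\cB}\colon\cB\to T$.

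Finally, $\cB$ being closed in $\cX$ and $f$ proper, the morphism $g$ is proper (and of finite type). By Chevalley's theorem the function $z\mapsto\dim_z g^{-1}(g(z))$ is upper semicontinuous on $\cB$, so for each $m$ the set $\{z\in\cB\mid\dim_z\cB_{g(z)}\geq m\}$ is closed; as $g$ is a closed map its image $\{t\in T\mid\varphi(t)\geq m\}$ is closed in $T$, which is precisely the claimed upper semicontinuity. The main obstacle is the second step: base-change compatibility of $\cE$ rests on the local constancy of $\pg=\hh^0(\omega)$, a genuinely characteristic-zero phenomenon. In positive characteristic $\hh^0(\omega)$ can jump on special fibres, whereupon $\cB\cap\cX_t$ computes only the base locus of the subsystem spanned by those sections that extend over the family---generally larger than $\Bs|\omega_{\cX_t}|$---and upper semicontinuity of $\varphi$ may fail, in accordance with the essential role of the characteristic hypothesis noted after \Cref{corollary:zero-dim-Bs}.
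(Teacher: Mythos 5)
Your proposal is correct and follows essentially the same route as the paper's proof: both construct a relative canonical base locus $\cB\subset\cX$ from the counit/evaluation map $f^*f_*\omega_{\cX/T}\to\omega_{\cX/T}$, identify its fibres with $\Bs|\omega_{\cX_t}|$ via the base-change isomorphism $f_*\omega_{\cX/T}\otimes\kappa(t)\cong\HH^0(\cX_t,\omega_{\cX_t})$ furnished by Hodge-to-de Rham degeneration in characteristic zero, and conclude by upper semicontinuity of fibre dimension for the proper morphism $\cB\to T$. The only (immaterial) differences are that you define $\cB$ as the support of the cokernel of the evaluation map rather than by the image ideal sheaf, and you spell out the Chevalley/closed-image step that the paper leaves implicit.
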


\begin{proof}
Define the closed subscheme
\(
  \cB \subset \cX
\),
which could be called the \emph{relative canonical base locus}, by the ideal sheaf~$\operatorname{im}(f^* f_*\omega_f\otimes\omega_f^{-1}\to\mathcal{O}_{\mathcal{X}})$, where the morphism is induced by tensoring the counit morphism~$f^* f_*\omega_f\to\omega_f$ by~$\smash{\omega_f^{-1}}$. Here~$\omega_f=\det\Omega_{\cX/T}^1$ is the relative canonical bundle.

We obtain the associated right exact sequence
\begin{equation}
  \label{eq:standard right exact sequence}
  f ^{ \ast } f _{ \ast }\omega _{ f } \otimes \omega _{ f } ^{ - 1}
  \to
  \cO _{ \cX }
  \to
  \cO _{ \cB }
  \to
  0.
\end{equation}
In the rest of the proof, we will show that for each
\(
  t \in T
\)
the fibre
\(
  \cB _{ t }
\)
coincides with the canonical base locus of the fibre
\(
  \cX _{ t } = f ^{ -1 } ( t )
\).
As the fibre dimension of the proper morphism
\(
  \cB \to T
\)
is upper semi-continuous, we obtain the assertion.

Taking the fibre of \eqref{eq:standard right exact sequence} over~$t\in T$ we obtain the right exact sequence
\begin{equation}
  \left( f _{ \ast }  \omega _{ f }  \otimes _{ \cO _{ T } } \bfk( t ) \right)
  \otimes _{ \bfk ( t ) } \omega _{ \cX _{ t } } ^{ - 1}
  \to
  \cO _{ \cX _{ t } }
  \to
  \cO _{ \cB _{ t } }
  \to
  0.
\end{equation}
Therefore all we have to show is that the canonical map
\begin{equation}
  f _{ \ast }  \omega _{ f }  \otimes _{ \cO _{ T } } \bfk ( t )
  \to
  \HH^{ 0 } ( \cX _{ t }, \omega _{ \cX _{ t } } )
\end{equation}
is an isomorphism. But this is a famous consequence of the degeneration of the Hodge-to-de Rham spectral sequence (see, say, \cite[Theorem 10.21]{MR1193913}) and this is where we need the assumption on the characteristic.
\end{proof}

\begin{remark}\label{remark:Fakhruddin}
It is well known that the invariance of (pluri-)genera fails in positive characteristic and mixed characteristic (and for non-K\"ahler manifolds). In fact, \Cref{proposition:canonical-base-locus-upper-semicontinuous} and \Cref{corollary:zero-dim-Bs} completely fail without the assumption on the characteristic. To see this, let
\(
  \bfk
\)
be an algebraically closed field of characteristic~\( 2 \), and
\(
  X
\)
be a supersingular Enriques surface \cite{MR3393362}.
Then one can find a smooth projective morphism
\(
  \cX \to T
\)
to a smooth pointed curve \( (T,0) \) such that
\begin{itemize}
  \item \(\cX _{ 0 } \simeq X\), and
  \item \( \cX _{ t } \) is a classical Enriques surface for $t \in T\setminus \set{0}$.
\end{itemize}

For a classical Enriques surface~$Y$ we have that
\(
  \HH ^{ i } ( Y,\cO_Y ) = 0
\)
for
\(
  i = 1, 2
\),
\(
  \omega_Y \not\simeq \cO_Y
\),
and
\(
  \omega_Y ^{ \otimes 2 }
  \simeq
  \cO_Y
\). On the other hand, for a singular or a supersingular Enriques surface~$X$ we have that
\(
  \omega_X \simeq \cO_X
\).
Therefore the family
\(
  \cX \to T
\)
considered above shows that the invariance of genera fails in characteristic \(2\).

Since
\(
  \omega _{ X } \simeq \cO _{ X }
\), there is no non-trivial semiorthogonal decomposition of
\(
  \derived ^{ \bounded } ( X )
\).
On the other hand, if we let
\(
  \pi
\)
be the restriction of the morphism $\mathcal X \to T$ over
\(
  T \setminus \set{ 0 }
\),
then the assumption implies that
\(
  \mathbf{R} \pi _{ \ast } \cO_{\mathcal{X}\setminus X} \simeq \cO_{T\setminus \set{ 0 }}
\).
Hence there is a non-trivial
\(
  (T \setminus \set{ 0 })
\)-linear semiorthogonal decomposition of
\(
  \derived ^{ \bounded } \left(\cX \setminus X\right)
\)
which does \emph{not} extend to the central fibre.
If we take
\(
  t _{ 0 } = 0
\)
in the statement of \Cref{corollary:zero-dim-Bs}, this shows that the assertion does not hold in characteristic \(2\).

It is shown in \cite[Theorem 4.10]{MR3393362} that any Enriques surface defined in positive characteristic admits a lifting to characteristic \(0\). Let
\(
  X
\)
be a supersingular Enriques surface defined in characteristic \(2\), and let
\(
  \cX \to T
\)
be its lifting to characteristic \(0\).
Since any Enriques surface in characteristic \(0\) is classical, this gives a similar example as above in mixed characteristic (where the characteristic of the residue field is \(2\)).
\end{remark}

We can now prove \cref{corollary:zero-dim-Bs}, which implements the explicit check for indecomposability in families.

\begin{proof}[Proof of \Cref{corollary:zero-dim-Bs}]
  Let~$\mathcal{X}\to T$ be a smooth projective morphism as in \Cref{theorem:indecomposability}. Then the set~$U\colonequals\varphi^{-1}(\set{-\infty,0})$ is an open subset of~$T$, by \Cref{proposition:canonical-base-locus-upper-semicontinuous}, which is non-empty by assumption. Applying \Cref{theorem:kawatani-okawa} we can conclude by \Cref{theorem:indecomposability}.
\end{proof}

\section{Symmetric products of curves}
\label{section:symmetric-products}
Let $C$ be a smooth complex projective curve of genus $g\geq 3$, and let $n\geq 1$.
We denote by $\Sym^nC$ the $n$-fold symmetric product, which is the smooth $n$-dimensional variety parameterising unordered $n$-tuples $D=p_1+\dots+p_n$ of points $p_1,\dots,p_n\in C$.
In this section we prove \Cref{corollary:sym} by applying \Cref{theorem:indecomposability}.
To this aim we need a good understanding of the canonical base locus of~$\Sym^nC$.
For this we recall some standard facts on the Abel--Jacobi map and Brill--Noether theory \cite[\S IV.1]{MR0770932}.

Let us consider the Abel--Jacobi map
\begin{equation}
\AJ_n\colon\Sym^nC\to\Pic^nC, \quad D \mapsto \mathcal O_C(D),
\end{equation}
such that~$\AJ_n^{-1}(\AJ_n(D))=\bP\HH^0(C,\mathcal{O}_C(D)) ^{ \vee }$. For~$r=0,\ldots,n$, we have the following standard subschemes of~$\Sym^nC$
\begin{equation}
  C_n^r
  \colonequals
  \left\{ D=q_1+\dots+q_n\mid \rank(\mathrm{d}\AJ_{n,D})\leq n-r \right\}\subseteq\Sym^nC,
\end{equation}
and the subschemes of the Picard scheme $W_n^r\colonequals \AJ_n(C_n^r)\subseteq\Pic^nC$, endowed with the morphisms
\begin{equation}
  \label{equation:AJ-n-r}
  \AJ_n^r=\AJ_n|_{C_n^r}\colon C_n^r\to W_n^r.
\end{equation}
We note that $\Sym^nC=C_n^0$, and we define~$W_n\colonequals W_n^0$.
Moreover, ignoring the scheme-theoretic structure, we have set-theoretic identities
\begin{equation}
  \begin{aligned}
    \Supp C_n^r
    &=
    \left\{ D\in\Sym^nC \mid \hh^0(C,\mathcal{O}_C(D))\geq r+1 \right\}, \\
    \Supp W_n^r
    &=
    \left\{ \mathcal{L}\in\Pic^nC \mid \hh^0(C,\mathcal{L})\geq r+1 \right\}.
  \end{aligned}
\end{equation}

Let $\gon(C)$ denote the \emph{gonality} of the curve $C$, that is the least degree of a non-constant morphism $C\to \mathbb{P}^1$.
The following result describes the canonical base locus of~$\Sym^nC$ depending on the gonality of $C$, and it also appears as \cite[Proposition~3.4]{1807.10702v1}.
We present an alternative proof, relying on Macdonald’s description of canonical divisors on $\Sym^n C$ \cite{MR0151460} and the geometric version of the Riemann--Roch theorem \cite[\S I.2]{MR0770932}.
\begin{proposition}
  \label{proposition:curve-base-locus}
  Let~$C$ be a non-hyperelliptic curve of genus~$g\geq 3$ and let~$n=1,\ldots,g-1$. Then
  \begin{enumerate}
    \item $\Bs|\omega_{\Sym^nC}|=\emptyset$ if and only if~$n<\gon(C)$;
    \item\label{enumerate:curve-base-locus-2} if~$n\geq\gon(C)$, then~$\Bs|\omega_{\Sym^nC}|$ is infinite, and set-theoretically we have an equality
      \begin{equation}
        \Bs|\omega_{\Sym^nC}|
        =
        C_n^1.
      \end{equation}
  \end{enumerate}
\end{proposition}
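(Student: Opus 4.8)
The plan is to combine two classical ingredients: Macdonald's description of the holomorphic $n$\dash forms on $\Sym^nC$ and the geometric form of the Riemann--Roch theorem for the canonically embedded curve. Fix a basis $\omega_1,\dots,\omega_g$ of $\HH^0(C,\omega_C)$. Macdonald's computation \cite{MR0151460} identifies $\HH^0(\Sym^nC,\omega_{\Sym^nC})$ with $\bigwedge^n\HH^0(C,\omega_C)$, a space of dimension $\binom{g}{n}>0$, in such a way that the form attached to a wedge $\omega_{i_1}\wedge\dots\wedge\omega_{i_n}$ evaluates at a reduced divisor $D=p_1+\dots+p_n$, in suitable local trivialisations of $\omega_C$ at the $p_b$, as the $n\times n$ determinant $\det\bigl(\omega_{i_a}(p_b)\bigr)_{a,b}$. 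Consequently $D$ lies in $\Bs|\omega_{\Sym^nC}|$ precisely when every $n\times n$ minor of the $g\times n$ evaluation matrix $M(D)=\bigl(\omega_i(p_b)\bigr)_{1\leq i\leq g,\,1\leq b\leq n}$ vanishes, that is, when $\rank M(D)<n$.

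Next I would make the link with the Brill--Noether locus $C_n^1$ explicit. Since $C$ is non-hyperelliptic, the canonical map $\phi\colon C\hookrightarrow\bP^{g-1}$ is an embedding --- this is where the hypothesis is used --- and, up to the chosen trivialisations, the $b$\dash th column of $M(D)$ is a homogeneous coordinate vector of $\phi(p_b)$; hence $\rank M(D)=1+\dim\langle\phi(D)\rangle$, where $\langle\phi(D)\rangle\subseteq\bP^{g-1}$ is the linear span. The geometric Riemann--Roch theorem \cite[\S\,I.2]{MR0770932} gives $\dim\langle\phi(D)\rangle=n-\hh^0(C,\mathcal{O}_C(D))$, so $\rank M(D)<n$ holds if and only if $\hh^0(C,\mathcal{O}_C(D))\geq 2$, i.e.\ if and only if $D\in C_n^1$. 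This yields the set-theoretic equality $\Bs|\omega_{\Sym^nC}|=C_n^1$. The only genuinely delicate point, and the one I expect to be the main obstacle, is the passage to \emph{non-reduced} $D$: there both the evaluation formula and the span $\langle\phi(D)\rangle$ must be read with the appropriate osculating (Wronskian) conditions, replacing the columns $\phi(p_b)$ by the jets of $\phi$ along $D$. Granting this refinement, the determinantal criterion and the identity $\dim\langle\phi(D)\rangle=n-\hh^0(C,\mathcal{O}_C(D))$ persist verbatim, and the argument carries over unchanged.

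Finally, the equality $\Bs|\omega_{\Sym^nC}|=C_n^1$ reduces both assertions to the geometry of $C_n^1$. By the definition of gonality, a degree\dash$n$ effective divisor $D$ with $\hh^0(C,\mathcal{O}_C(D))\geq 2$ exists if and only if $n\geq\gon(C)$, which gives $C_n^1=\emptyset\iff n<\gon(C)$ and hence part~(1). For part~(2), assume $n\geq\gon(C)$, choose a pencil $g^1_{\gon(C)}$ with fibres $\{E_t\}_{t\in\bP^1}$, and fix an effective divisor $D'$ of degree $n-\gon(C)$; then $E_t+D'\in C_n^1$ for every $t\in\bP^1$, because $\hh^0(C,\mathcal{O}_C(E_t+D'))\geq\hh^0(C,\mathcal{O}_C(E_t))\geq 2$. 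Distinct members of the pencil are distinct as divisors, so these form an infinite family and $C_n^1$ is infinite, completing part~(2).
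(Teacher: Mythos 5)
Your strategy --- Macdonald's basis of $\HH^0(\Sym^nC,\omega_{\Sym^nC})$, the determinantal evaluation criterion, and the geometric Riemann--Roch theorem to translate the rank condition into $\hh^0(C,\cO_C(D))\geq 2$ --- is exactly the skeleton of the paper's argument (and your pencil argument for the infiniteness in part~(2) is fine). On the locus of \emph{reduced} divisors your proof is complete and agrees with the paper's \Cref{lemma:base-locus-symn-C-2}. But the point you flag and then ``grant'' is a genuine gap, not a routine refinement: the claim that Macdonald's evaluation formula persists at non-reduced $D$ with columns replaced by jets is a real assertion that must be proved. The reason it is delicate is that the quotient $C^n\to\Sym^nC$ is ramified along the big diagonal (a \emph{divisor}), so the identification of $\xi_{j_1}\wedge\cdots\wedge\xi_{j_n}$ with the determinant $\det(\omega_{j_k}(q_i))$ breaks down there: in symmetric coordinates the coefficient of the wedge is a divided difference, and one must verify that its limit along the diagonal is the Wronskian-type jet determinant. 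Nothing in your proposal establishes this, and without it neither inclusion $\Bs|\omega_{\Sym^nC}|\subseteq C_n^1$ nor $C_n^1\subseteq\Bs|\omega_{\Sym^nC}|$ is known at points of the diagonal $\Delta$.

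It is instructive to compare with how the paper avoids this computation entirely, by splitting the two inclusions and using different tools for each. For $\Bs|\omega_{\Sym^nC}|\subseteq C_n^1$ it produces explicit canonical divisors as subordinate varieties $\Gamma(\mathfrak{D}_L)$ attached to $(g-1-n)$-planes $L\subset\bP^{g-1}$, and works with the scheme-theoretic span $\overline{\phi(Q)}$ (intersection of hyperplanes $H$ with $\phi^*(H)\geq Q$); this argument is insensitive to whether $Q$ is reduced. For the reverse inclusion it proves the statement only on $C_n^1\smallsetminus\Delta$ via Macdonald's formula (your argument), and then handles $C_n^1\cap\Delta$ by a degeneration: using Bertini's theorem and a case analysis on the linear series $|Q|$, it shows every point of $C_n^1\cap\Delta$ lies in the Zariski closure of $C_n^1\smallsetminus\Delta$, which suffices because the base locus is closed. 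Note that this closure step is itself not automatic --- a priori $|Q|$ could have the non-reduced part as a base part, which is why the paper argues by cases --- so you cannot shortcut it by simply asserting density of $C_n^1\smallsetminus\Delta$ in $C_n^1$ either. To complete your proof you would need either the local divided-difference/Wronskian computation on $\Sym^nC$ near $\Delta$, or one of the paper's two substitutes.
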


\begin{proof}
  It is well-known that~$\Bs|\omega_C|=\emptyset$, so we can assume that~$n=2,\ldots,g-1$.
  The following two lemmas show that the set-theoretic equality $\Bs|\omega_{\Sym^nC}|=C_n^1$ holds, so the proof ends by observing that~$C_n^1=\emptyset$ if and only if~$n<\gon(C)$.
\end{proof}

\begin{lemma}
  \label{lemma:base-locus-symn-C-1}
  If~$q_1+\cdots+q_n\in\Sym^nC$ is in~$\Bs|\omega_{\Sym^nC}|$ then~$q_1+\cdots+q_n\in C_n^1$.
\end{lemma}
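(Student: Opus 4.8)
The plan is to prove the contrapositive: for every $D=q_1+\cdots+q_n$ with $D\notin C_n^1$ I would exhibit an explicit canonical form on $\Sym^nC$ that does \emph{not} vanish at $D$, which shows $D\notin\Bs|\omega_{\Sym^nC}|$. The input is Macdonald's description of $\HH^0(\Sym^nC,\omega_{\Sym^nC})$. Fixing a basis $\omega_1,\dots,\omega_g$ of $\HH^0(C,\omega_C)$, every $n$-element subset $I=\{i_1<\cdots<i_n\}\subseteq\{1,\dots,g\}$ produces a holomorphic $n$-form $\Omega_I$ on $\Sym^nC$, obtained by descending the $\symmetric_n$-invariant form $\sum_{\sigma}\operatorname{sgn}(\sigma)\,\mathrm{pr}_1^*\omega_{i_{\sigma(1)}}\wedge\cdots\wedge\mathrm{pr}_n^*\omega_{i_{\sigma(n)}}$ along the quotient $C^n\to\Sym^nC$. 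Since these $\binom{g}{n}=\hh^0(\Sym^nC,\omega_{\Sym^nC})$ forms constitute a basis, one has $D\in\Bs|\omega_{\Sym^nC}|$ if and only if $\Omega_I$ vanishes at $D$ for every such $I$.

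Next I would evaluate these forms. Over the open locus of reduced divisors the quotient $C^n\to\Sym^nC$ is \'etale, so working in local coordinates $t_b$ at the distinct points $q_b$ one computes directly that, in the trivialisation $\mathrm{d}t_1\wedge\cdots\wedge\mathrm{d}t_n$ of $\omega_{\Sym^nC}$,
\[
  \Omega_I(D)=\det\bigl(\omega_{i_a}(q_b)\bigr)_{1\le a,b\le n},
\]
i.e.\ the value is the $n\times n$ minor on the rows $I$ of the $g\times n$ matrix whose columns are the canonical images of the $q_b$. Hence all the $\Omega_I$ vanish at $D$ precisely when this matrix has rank $<n$, that is, when $q_1,\dots,q_n$ on the canonical curve $C\hookrightarrow\bP\HH^0(C,\omega_C)^\vee$ (an embedding, as $C$ is non-hyperelliptic) fail to span a $\bP^{n-1}$. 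By the geometric form of Riemann--Roch the dimension of this span equals $n-1-\dim|D|$, so the span is deficient exactly when $\hh^0(C,\cO_C(D))\ge 2$, which is the defining condition for $D\in C_n^1$. This settles the statement for reduced $D$.

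The main obstacle is the case of a non-reduced $D=\sum_j m_jp_j$, where $C^n\to\Sym^nC$ is ramified and the naive trivialisation breaks down. The way I would handle it is to replace the matrix of point-evaluations by its confluent (Wronskian) analogue: expressing $\Sym^nC$ near $D$ in terms of the elementary symmetric functions of the colliding coordinates, the antisymmetric form $\Omega_I$ degenerates, after the appropriate rescaling, into the determinant whose columns at each $p_j$ are $\omega_{i_a}(p_j),\omega_{i_a}'(p_j),\dots,\omega_{i_a}^{(m_j-1)}(p_j)$. The vanishing of all such confluent minors is again equivalent to the deficiency of the linear span of the length-$n$ subscheme $D$ on the canonical curve, and geometric Riemann--Roch in its scheme-theoretic form (using osculating spaces) identifies this deficiency with $\hh^0(C,\cO_C(D))\ge 2$. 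Carrying out this local computation carefully --- and checking the compatibility of the rescalings with the change to symmetric coordinates --- is the one genuinely technical point; everything else is linear algebra together with the classical dictionary between special divisors and their canonical spans.
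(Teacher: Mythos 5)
Your proposal is correct in outline, but it takes a genuinely different route from the paper's proof of this lemma. The paper argues synthetically: for each $(g-1-n)$\dash plane $L\subset\bP^{g-1}$ it invokes the subordinate variety $\Gamma(\mathfrak{D}_L)$, which is a \emph{canonical divisor} of $\Sym^nC$ by \cite[Lemma~2.1]{MR2888217}; a base point $Q$ then lies on every $\Gamma(\mathfrak{D}_L)$, forcing $\overline{\phi(Q)}$ and $L$ to lie in a common hyperplane for every $L$, which is impossible unless $\dim\overline{\phi(Q)}\leq n-2$, and geometric Riemann--Roch concludes. This treats reduced and non-reduced divisors uniformly, with no local computation. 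You instead evaluate Macdonald's basis of canonical forms directly, identifying the value at a reduced divisor with a maximal minor of the evaluation matrix and, at a non-reduced divisor, with its confluent (Wronskian/osculating) analogue; rank deficiency then translates into span deficiency and geometric Riemann--Roch finishes. Your route has real advantages: it is self-contained (no appeal to the subordinate-variety result), and it proves the set-theoretic equality $\Bs|\omega_{\Sym^nC}|=C_n^1$ in one stroke --- indeed the paper's proof of the converse (\Cref{lemma:base-locus-symn-C-2}) is precisely the reduced-divisor half of your computation, supplemented by a closure argument that works only in that direction, since non-vanishing is an open condition and base loci can a priori acquire extra components inside the diagonal; you rightly attack the non-reduced case head-on rather than by degeneration. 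The price is that the confluent evaluation --- the statement that, in elementary symmetric coordinates and after dividing by the group Vandermondes, Macdonald's forms limit to the determinant with derivative columns $\omega_{i_a}(p_j),\ldots,\omega_{i_a}^{(m_j-1)}(p_j)$, and that vanishing of all such minors equals deficiency of the scheme-theoretic span --- is only sketched in your write-up. That step is classical (Hermite interpolation/divided differences, plus geometric Riemann--Roch for arbitrary effective divisors as in \cite[\S\,I.2]{MR0770932}) and will go through, but it is where all the actual work lies, and a complete proof would have to carry it out; the paper's choice of argument can be read as a way of avoiding exactly this local analysis.
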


\begin{proof}
  Let $\phi\colon C\hookrightarrow\mathbb{P}^{g-1}$ be the canonical embedding and let $P=p_1+\cdots+p_n\in \Sym^n C$. As in \cite[page~12]{MR0770932}, we denote by~$\overline{\phi(P)}$ the intersection of the hyperplanes~$H\subset \mathbb{P}^{g-1}$ such that~$\phi^*(H)\geqslant P$. In particular, we note that~$\dim \overline{\phi(P)}\leqslant n-1$, and if the points~$p_1,\ldots,p_n$ are distinct, then~$\overline{\phi(P)}$ is simply the linear span of~$\phi(p_1),\ldots,\phi(p_n)$.

  Given a~$(g-1-n)$-plane~$L\subset \mathbb{P}^{g-1}$, let~$\mathfrak{D}_L$ be the linear series of divisors~$\phi^*(H)$ on~$C$ cut out by the hyperplanes~$H\subset \mathbb{P}^{g-1}$ containing~$L$. So~$\mathfrak{D}_L$ is a linear system of degree $2g-2$ and dimension $n-1$, possibly, with base points at $L\cap \phi(C)$.

  Consider the subordinate variety~$\Gamma(\mathfrak{D}_L)$, which is a determinantal variety supported on the set
  \begin{equation}
    \Supp\Gamma(\mathfrak{D}_L)
    \colonequals
    \left\{ P=p_1+\cdots+p_n\in \Sym^n C\mid D-P\geqslant 0 \textrm{ for some }D\in |\mathfrak{D}_L| \right\}
  \end{equation}
  (cf.~\cite[pages~341--342]{MR0770932}). By \cite[Lemma~2.1]{MR2888217}, $\Gamma(\mathfrak{D}_L)$ is a canonical divisor of $\Sym^nC$.

  We claim that if~$Q=q_1+\cdots+q_n\in {\Sym^n C}$ is a base point of~$|\omega_{\Sym^n C}|$, then $\dim \overline{\phi(Q)}\leqslant n-2$. Indeed, for any~$(g-1-n)$-plane~$L\subset \mathbb{P}^{g-1}$, $Q$~lies on the canonical divisor~$\Gamma(\mathfrak{D}_L)$. Hence there exists a hyperplane~$H\subset \mathbb{P}^{g-1}$ containing~$L$ such that~$\phi^*(H)\geqslant Q$, so that~$\overline{\phi(Q)}\subset H$ as well.
  However, if we assumed~$\dim \overline{\phi(Q)}= n-1$ and we considered a~$(g-1-n)$-plane~$L\subset \mathbb{P}^{g-1}$ not meeting~$\overline{\phi(Q)}$, the linear span of~$L$ and~$\overline{\phi(Q)}$ would be the whole~$\mathbb{P}^{g-1}$, a contradiction.

  Thus~$\dim \overline{\phi(Q)}\leqslant n-2$, and the geometric version of the Riemann--Roch theorem yields that~$\dim |Q|=\deg Q- 1-\dim \overline{\phi(Q)}\geqslant 1$, that is~$Q\in C^1_n$.
\end{proof}

\begin{lemma}
  \label{lemma:base-locus-symn-C-2}
  If~$q_1+\cdots+q_n\in C_n^1$, then~$q_1+\cdots+q_n\in\Bs|\omega_{\Sym^nC}|$.
\end{lemma}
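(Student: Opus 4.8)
The plan is to mirror the argument of \Cref{lemma:base-locus-symn-C-1} in reverse, exploiting that the subordinate canonical divisors $\Gamma(\mathfrak{D}_L)$ cut out the whole base locus. First I would record, via the geometric version of the Riemann--Roch theorem \cite[\S\,I.2]{MR0770932}, that the hypothesis $Q=q_1+\cdots+q_n\in C_n^1$ is equivalent to $\dim|Q|\geq 1$, hence to the inequality
\[
  \dim\overline{\phi(Q)} = \deg Q - 1 - \dim|Q| = n-1-\dim|Q| \leq n-2.
\]
This is the exact counterpart of the estimate obtained in \Cref{lemma:base-locus-symn-C-1}, now used as an \emph{input} rather than a conclusion.

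Next I would invoke Macdonald's description of the canonical system \cite{MR0151460}. Under the identification $\HH^0(\Sym^nC,\omega_{\Sym^nC})\cong\bigwedge^n\HH^0(C,\omega_C)$, the divisors $\Gamma(\mathfrak{D}_L)$ attached to $(g-1-n)$-planes $L\subset\mathbb{P}^{g-1}$ correspond to the decomposable elements of the exterior power, and these span it linearly. Consequently the $\Gamma(\mathfrak{D}_L)$ span the complete linear system $|\omega_{\Sym^nC}|$, and therefore the base locus is their common intersection,
\[
  \Bs|\omega_{\Sym^nC}| = \bigcap_{L}\Gamma(\mathfrak{D}_L),
\]
the intersection ranging over all $(g-1-n)$-planes $L\subset\mathbb{P}^{g-1}$. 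Thus it suffices to prove that $Q\in\Gamma(\mathfrak{D}_L)$ for every such $L$.

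By definition of the subordinate variety, $Q\in\Gamma(\mathfrak{D}_L)$ precisely when there is a hyperplane $H\supseteq L$ with $\phi^*(H)\geq Q$. Since the hyperplanes satisfying $\phi^*(H)\geq Q$ form the linear system $\mathbb{P}(\HH^0(C,\omega_C(-Q)))$, which by the double-annihilator identity is exactly the system of hyperplanes containing $\overline{\phi(Q)}$, this amounts to finding a single hyperplane containing both $L$ and $\overline{\phi(Q)}$. Such a hyperplane exists as soon as the span $\langle L,\overline{\phi(Q)}\rangle$ is a proper subspace of $\mathbb{P}^{g-1}$, and the dimension estimate
\[
  \dim\langle L,\overline{\phi(Q)}\rangle \leq \dim L + \dim\overline{\phi(Q)} + 1 \leq (g-1-n)+(n-2)+1 = g-2
\]
delivers exactly this, for every choice of $L$. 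This completes the argument.

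The step I expect to be the main obstacle is the justification that the $\Gamma(\mathfrak{D}_L)$ span the full canonical system, equivalently that their common intersection equals the base locus and not merely some larger set. This is where Macdonald's computation is genuinely needed: the easy inclusion used in \Cref{lemma:base-locus-symn-C-1} --- that a base point lies on every canonical divisor --- only yields one direction, whereas here the reverse implication requires knowing that the decomposable divisors $\Gamma(\mathfrak{D}_L)$ already detect base points. The remaining ingredients, namely the dimension count in $\mathbb{P}^{g-1}$ and the translation through geometric Riemann--Roch, are elementary.
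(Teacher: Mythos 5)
Your proof is correct, and it takes a genuinely different route from the paper's. The paper evaluates Macdonald's basis $\xi_{j_1}\wedge\cdots\wedge\xi_{j_n}$ of $\HH^0(\Sym^nC,\omega_{\Sym^nC})$ directly at $Q$: off the diagonal $\Delta$ the vanishing reduces to rank deficiency of the $n\times g$ matrix $\bigl(\omega_j(q_i)\bigr)$, which follows from the same geometric Riemann--Roch estimate $\dim\overline{\phi(Q)}\leq n-2$ that you use; but since the pointwise formula $\xi_j(Q)=\sum_i\omega_j(q_i)$ is only valid for distinct points, the paper must then run a separate Bertini/closure argument to show that each $Q\in C_n^1\cap\Delta$ is a limit of points of $C_n^1\setminus\Delta$. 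You instead dualise \Cref{lemma:base-locus-symn-C-1}: the base locus equals the common zero locus of any spanning set of sections, decomposable sections span $\bigwedge^n\HH^0(C,\omega_C)\cong\HH^0(\Sym^nC,\omega_{\Sym^nC})$, and $Q\in\Gamma(\mathfrak{D}_L)$ for every $L$ follows from an elementary span computation in $\mathbb{P}^{g-1}$. Because geometric Riemann--Roch and the span $\overline{\phi(Q)}$ make sense for arbitrary effective, possibly non-reduced, divisors \cite[\S\,I.2]{MR0770932}, your argument treats diagonal and non-diagonal points uniformly and avoids the paper's degeneration step entirely --- a real simplification. The one point to nail down, which you correctly single out, is the identification of $\Gamma(\mathfrak{D}_L)$ with the zero divisor of the decomposable section $s_1\wedge\cdots\wedge s_n$, where $\langle s_1,\ldots,s_n\rangle\subseteq\HH^0(C,\omega_C)$ is the subspace of sections whose hyperplanes contain $L$: the citation of \cite{MR2888217} in the paper asserts only that $\Gamma(\mathfrak{D}_L)$ is \emph{some} canonical divisor, which suffices for \Cref{lemma:base-locus-symn-C-1} but not for your converse direction. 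The refinement is true and standard: off $\Delta$ both divisors are cut out by $\det\bigl(s_j(q_i)\bigr)$, and two linearly equivalent effective divisors agreeing off the irreducible divisor $\Delta$ must coincide, since their difference is an integer multiple of $\Delta$ linearly equivalent to $0$. With that justification supplied, your proof is complete, and it has the merit of exhibiting the two lemmas as exact duals of one another.
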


\begin{proof}
  Consider the~$n$\dash fold ordinary product~$C^n$ endowed with the natural projections~$\pi_i\colon C^n\to C$, with $i=1,\ldots,n$.
  We follow \cite[\S3-8]{MR0151460}, and we consider a basis~$\left\{\omega_1,\dots,\omega_g\right\}$ of the space~$\HH^0(C,\omega_C)\cong\HH^{1,0}(C)$.
  Then for any $j=1,\ldots,g$, we can define a holomorphic 1-form on~$C^n$
  \begin{equation}
    \xi_j\colonequals\sum_{i=1}^n \pi_i^*\omega_j \in\HH^{1,0}(C^n),
  \end{equation}
  which is invariant under the action of the symmetric group~$\symmetric_n$, so that it can be considered as a holomorphic~1\dash form on the~$n$\dash fold symmetric product~$\Sym^nC$.

  In \cite[\S8]{MR0151460}, Macdonald proved that the set of holomorphic~$n$\dash forms
  \begin{equation}
    \left\{ \xi_{j_1}\wedge \xi_{j_2}\wedge\dots\wedge \xi_{j_n}\mid 1\leqslant j_1<j_2<\dots<j_n\leqslant g \right\}
  \end{equation}
  is a basis for the space~$\HH^{n,0}(\Sym^nC)\cong\HH^0(\Sym^nC,\omega_{\Sym^nC})$ of canonical forms on~$\Sym^nC$.

  Now we consider a point~$Q\colonequals q_1+\cdots+q_n\in C^1_n$ and we assume furthermore that the points~$q_1,\dots,q_n\in C$ are distinct, that is~$Q\in C^1_n\smallsetminus \Delta$, where
  \begin{equation}
    \Delta\colonequals\left\{ 2p+p_3+\cdots+p_n\in \Sym^nC\mid p,p_3,\ldots,p_n\in C \right\}
  \end{equation}
  is the diagonal divisor of $\Sym^nC$. In this case, we may write (with a slight abuse of notation)
  \begin{equation}
    \label{xiQ}
    \xi_j(Q)=\sum_{i=1}^n \omega_j(q_i).
  \end{equation}
  As in the proof of \Cref{lemma:base-locus-symn-C-1}, we consider the canonical embedding~$\phi\colon C\hookrightarrow\mathbb{P}^{g-1}$ and, by the geometric version of the Riemann--Roch theorem, we deduce that the linear span~$\overline{\phi(Q)}$ of~$\phi(q_1),\dots,\phi(q_n)$ has dimension~$\dim \overline{\phi(Q)} = \deg Q- 1-\dim |Q|\leqslant n-1-1=n-2$.

  This is equivalent to saying that the vectors
  \begin{equation}
    (\omega_1(q_i),\dots,\omega_g(q_i)) \quad i=1,\dots,n
  \end{equation}
  are linearly dependent, i.e.~the rank of the~$n\times g$ matrix~$A\colonequals\left(\omega_j(q_i)\right)$ is smaller than~$n$. Thus for any multi-index~$(j_1,j_2,\ldots,j_n)$ such that~$1\leqslant j_1<j_2<\ldots<j_n\leqslant g$, the columns~$(\omega_{j_{k}}(q_1),\dots,\omega_{j_{k}}(q_n))^{\rm T}$ of~$A$ are linearly dependent, and the equality \eqref{xiQ} gives that the canonical form~$\xi_{j_1}\wedge \xi_{j_2}\wedge\dots\wedge \xi_{j_n}$ vanishes at~$Q$. Since these holomorphic~$n$\dash forms provide a basis of~$\HH^0(\Sym^nC,\omega_{\Sym^nC})$, we deduce that if~$Q\in C^1_n$ and the points~$q_1,\dots,q_n\in C$ are distinct, then~$Q\in \Bs|\omega_{\Sym^nC}|$.

  To conclude the proof, we must assume~$Q\in C^1_n\cap\Delta$ and prove that~$Q\in \Bs|\omega_{\Sym^nC}|$. We recall that~$\Bs|\omega_{\Sym^nC}|$ is a closed subset of~$\Sym^nC$ and since~$C^1_n\smallsetminus \Delta\subset \Bs|\omega_{\Sym^nC}|$ by the first part of the proof, it is enough to show that if~$Q\in C^1_n\cap\Delta$, then~$Q$ belongs to the Zariski closure of~$C^1_n\smallsetminus \Delta$. To this aim, let us assume that~$Q=2q+q_3+\cdots+q_{n}$, where~$q,q_3,\dots,q_n$ are distinct points (the case with more points being equal is analogous) and we consider the complete linear series~$|Q|$ on~$C$. Since the divisor~$Q\in \Sym^nC$ is singular at the point~$q\in C$, Bertini's theorem ensures that either the general divisor~$P\in |Q|$ is smooth, or~$2q$ is a base point of the complete linear system~$|Q|$.

  In the former case~$Q$ belongs to the Zariski closure of the locus~$|Q|\smallsetminus \Delta\subset C^1_n$, which is contained in~$\Bs|\omega_{\Sym^nC}|$ by the first part of the proof.

  In the latter case, any divisor in~$|Q|$ has the form~$2q+p_3+\cdots+p_n$, with~$P=p_3+\cdots+p_n$ varying in a linear series~$|P|$ of degree~$n-2$ and dimension equal to~$\dim |Q|$. Therefore, taking a pair of general points~$p_1,p_2\in C$ and a divisor~$D\in |P|$, we have that~$p_1+p_2+D$ lies in~$C^1_n\smallsetminus \Delta$, which is contained in~$\Bs|\omega_{\Sym^nC}|$ by the first part of the proof. Since~$Q$ lies in the Zariski closure of the locus described by the points~$p_1+p_2+D\in C_n^1$ as above, we conclude that~$Q\in \Bs|\omega_{\Sym^nC}|$ as well.

  Finally, the case where~$Q=\sum_{x\in C}n_x x$ has more than two equal points can be handled recursively in the same way, by taking a general divisor~$P=\sum_{x\in C}m_x x\in |Q|$, and distinguishing the following cases: either~$m_x=n_x$ for any~$x\in C$ such that~$n_x>1$, or there exists some~$x\in C$ such that~$n_x>1$ and~$0\leqslant m_x<n_x$ (i.e.~$P$ is less singular than~$Q$ at~$x$).
\end{proof}

This gives an alternative proof of the main result of \cite{1807.10702v1}. The case~$g=2$ here is covered by \cite{MR2838062}.
\begin{proposition}
  \label{proposition:biswas-gomez-lee}
  Let~$C$ be a smooth projective curve of genus~$g\geq 2$. Let~$n=1,\ldots,\gon(C)-1$. Then~$\derived^\bounded(\Sym^nC)$ is indecomposable.
\end{proposition}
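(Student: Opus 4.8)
The plan is to deduce the statement from the two ingredients already assembled: the indecomposability of the derived category of a curve of genus $\geq 2$ proved in \cite{MR2838062}, and the computation of the canonical base locus in \Cref{proposition:curve-base-locus} combined with \Cref{theorem:kawatani-okawa}. The argument reduces to a short case analysis on $n$, arranged so that the hypotheses of \Cref{proposition:curve-base-locus} (non-hyperellipticity, and hence $g\geq 3$) are in force exactly when we invoke it.

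First I would treat the base case $n=1$ on its own. Here $\Sym^1 C\cong C$ is itself a smooth projective curve of genus $g\geq 2$, so $\derived^\bounded(\Sym^1 C)$ is indecomposable by \cite[Theorem~1.1]{MR2838062}. This single observation already settles every curve with $\gon(C)=2$, i.e.~all hyperelliptic curves as well as all curves of genus~$2$, since for such $C$ the only admissible index in the range $n=1,\ldots,\gon(C)-1$ is $n=1$.

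It then remains to handle $n\geq 2$. Since we assume $n\leq\gon(C)-1$, the inequality $\gon(C)\geq n+1\geq 3$ holds, which forces $C$ to be non-hyperelliptic and in particular $g\geq 3$. The hypotheses of \Cref{proposition:curve-base-locus} are therefore satisfied, and as $n<\gon(C)$ part~(1) of that proposition gives $\Bs|\omega_{\Sym^n C}|=\emptyset$. Consequently $\dim\Bs|\omega_{\Sym^n C}|=-\infty\leq 0$, and \Cref{theorem:kawatani-okawa} yields that $\derived^\bounded(\Sym^n C)$ is indecomposable, completing the proof.

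Because the substantive geometric input---the identification of the canonical base locus with $C_n^1$ and its emptiness for $n<\gon(C)$---has already been carried out in \Cref{lemma:base-locus-symn-C-1} and \Cref{lemma:base-locus-symn-C-2}, I do not expect a serious obstacle here; this proposition is essentially a bookkeeping corollary. The one point requiring genuine care is the case split: \Cref{proposition:curve-base-locus} is stated only for non-hyperelliptic curves of genus~$\geq 3$, so the hyperelliptic and genus-$2$ situations must be routed through the $n=1$ reduction to \cite{MR2838062} rather than through the base-locus computation.
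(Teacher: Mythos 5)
Your proof is correct and takes essentially the same route as the paper: the emptiness of the canonical base locus from \Cref{proposition:curve-base-locus}\,(1) combined with \Cref{theorem:kawatani-okawa}, with the hyperelliptic and genus-$2$ situations (where only $n=1$ occurs) delegated to \cite{MR2838062}. The paper's own proof is terser---it invokes the base-locus computation directly and disposes of the $g=2$ case in the sentence preceding the proposition---so your explicit case split on $n$ is simply a more careful write-up of the same argument.
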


\begin{proof}
  By \Cref{proposition:curve-base-locus}\,(1) we have that in this case the canonical base locus~$\Bs|\omega_{\Sym^nC}|$ is empty, so that by \Cref{theorem:kawatani-okawa} we have that~$\derived^\bounded(\Sym^nC)$ is indecomposable.
\end{proof}

With this result in mind, the following proposition is an application of \Cref{corollary:zero-dim-Bs}. Hence our main contribution here is the amplification using \Cref{theorem:indecomposability} from a general curve to \emph{every} curve, removing the dependence of~$n$ on the curve.

\begin{proposition}
  \label{proposition:symmetric-power-indecomposability}
  Let~$C$ be a smooth projective curve of genus~$g\geq 2$.
  Assume \cref{conjecture:ntsod}.
  Then~$\derived^\bounded(\Sym^nC)$ is indecomposable for~$n=1,\ldots,\lfloor\frac{g+3}{2}\rfloor-1$.
\end{proposition}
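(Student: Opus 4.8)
The plan is to realise $\Sym^n C$ as a single fibre of a smooth projective family whose general fibre has \emph{empty} canonical base locus, and then to let \Cref{corollary:zero-dim-Bs} propagate indecomposability from that general fibre to the fibre over $C$. The arithmetic behind the numerical range is the standard Brill--Noether fact that a general curve of genus $g$ has gonality exactly $\lfloor (g+3)/2\rfloor$; hence for a general curve and any $n\leq\lfloor\frac{g+3}{2}\rfloor-1$ we have $n<\gon$, so $\Bs|\omega_{\Sym^n(\text{general curve})}|=\emptyset$ by \Cref{proposition:curve-base-locus}\,(1). The whole point of the argument is therefore to connect the \emph{arbitrary} given curve $C$ to such a general curve inside one irreducible family over $\bC$, and to spread the symmetric product out over the base.

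First I would dispose of the boundary case $g=2$: here the only relevant value is $n=1$, so $\Sym^1C=C$ and the claim is already covered by \Cref{proposition:biswas-gomez-lee} (equivalently, by the known indecomposability of derived categories of curves of genus $\geq 2$). For $g\geq 3$ I would build an irreducible family of smooth genus-$g$ curves containing both $C$ and a general curve, using the irreducibility of the moduli space $\moduli_g$. Concretely, after fixing $m\geq 3$ and embedding curves by the complete $m$-canonical system, the locus $T$ in the relevant Hilbert scheme parametrising smooth $m$-canonically embedded genus-$g$ curves is irreducible (it fibres over the irreducible $\moduli_g$ with irreducible $\mathrm{PGL}$-orbits as fibres) and carries a tautological smooth projective family $f\colon\cX\to T$. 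By construction the given curve appears as some fibre $f^{-1}(t_0)\cong C$, while the generic fibre is a general, hence non-hyperelliptic, curve of genus $g$.

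Next I would form the relative $n$-th symmetric product $\Sym^n_T(\cX/T)=(\cX\times_T\cdots\times_T\cX)/\symmetric_n\to T$, the $n$-fold fibre product modulo the $\symmetric_n$-action. This is again smooth, projective, and geometrically connected over the irreducible base $T$, with fibre $\Sym^n(f^{-1}(t))$ over $t$; in particular the fibre over $t_0$ is $\Sym^n C$. At a point $t$ corresponding to a general curve, \Cref{proposition:curve-base-locus}\,(1) together with $n\leq\lfloor\frac{g+3}{2}\rfloor-1<\lfloor\frac{g+3}{2}\rfloor=\gon$ gives $\Bs|\omega_{\Sym^n(f^{-1}(t))}|=\emptyset$, so the upper-semicontinuous dimension function is $-\infty\leq 0$ there. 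Applying \Cref{corollary:zero-dim-Bs} to this relative symmetric product then forces $\derived^\bounded$ of every fibre to be indecomposable, and in particular $\derived^\bounded(\Sym^n C)$.

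The hard part will be the family-theoretic bookkeeping in the middle step rather than any deep geometry: one must produce an honest (not merely coarse) smooth projective family of curves over an irreducible base realising both $C$ and a general curve as fibres, which means rigidifying $\moduli_g$ via a Hilbert scheme of pluricanonically embedded curves and verifying irreducibility of that locus, and then checking that the relative symmetric product stays smooth, projective, and geometrically connected over $T$ with the expected fibres. These facts are standard, but they must be assembled with care because \Cref{corollary:zero-dim-Bs} requires a genuine morphism $f$, irreducibility of $T$, and the characteristic-$0$ hypothesis simultaneously; here all three hold, so the reduction to the general fibre is clean.
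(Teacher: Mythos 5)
Your proposal is correct and follows essentially the same route as the paper: both realise $C$ inside an irreducible family of genus-$g$ curves whose general member has the maximal gonality $\lfloor(g+3)/2\rfloor$, pass to the relative symmetric product, and propagate indecomposability from the general fibre to the fibre $\Sym^nC$ using the moduli-of-semiorthogonal-decompositions machinery. The only differences are bookkeeping: the paper rigidifies by taking a curve $T$ mapping finitely to a complete curve in $\mathcal{M}_g$ and applies \Cref{theorem:indecomposability} together with \Cref{proposition:biswas-gomez-lee}, whereas you rigidify via a Hilbert scheme of pluricanonically embedded curves, treat $g=2$ separately, and invoke \Cref{corollary:zero-dim-Bs} (itself a consequence of \Cref{theorem:indecomposability} and \Cref{proposition:canonical-base-locus-upper-semicontinuous}).
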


\begin{proof}
We note that the gonality of $C$ is the least integer $d$ such that $W_d^1(C)$ is non-empty, and $\left\lfloor \frac{g+3}{2}\right\rfloor$ is the least integer $d$ such that the Brill--Noether number $\rho(g,1,d):=g-2(g-d+1)$ is nonnegative.
Thus [1, Theorems V.1.1 and V.1.5] yield that $\gon(C)$ is bounded above by $\left\lfloor \frac{g+3}{2}\right\rfloor$ and a general curve realises this bound. 
  Therefore, thanks to \cite[Theorem, p.~78]{MR927950}, we can construct a smooth family $f\colon\mathcal{C}\to T$ of curves parameterised over a curve $T$ mapping finitely to a complete curve in the moduli space $\mathcal{M}_g$, where $f^{-1}(\overline{t})=C$ for some $\overline{t}\in T$ and the general member of the family is a curve of gonality $\lfloor\frac{g+3}{2}\rfloor$. In a little more detail, one would fix a smooth genus $g$ curve $D$ of maximal gonality $\lfloor\frac{g+3}{2}\rfloor$, and apply \cite[Theorem, p.~78]{MR927950} to obtain a complete curve $T' \subset \mathcal M_g$ passing through $[C]$ and $[D]$. Possibly after an \'etale cover $T \to T'$, this yields the desired family $\mathcal C \to T$.

  Thus the relative symmetric product $f_n\colon\Sym^n_T\mathcal{C}\to T$ is a smooth projective family, whose general fibre $f_n^{-1}(t)$ is the $n$-fold symmetric product of a smooth curve $f^{-1}(t)$ having gonality $\lfloor\frac{g+3}{2}\rfloor$, so that $\derived^\bounded(f_n^{-1}(t))$ is indecomposable by \Cref{proposition:biswas-gomez-lee}.
  Hence the morphism $f_n$ satisfies the assumptions of \Cref{theorem:indecomposability}, and the assertion follows.
\end{proof}

The expected indecomposability for the derived category for~$n\leq g-1$ was in this case
obtained by Lin~\cite{2107.09564} using different (but related) methods
involving the paracanonical base locus,
after the first version of the preprint was released.
Thus,
\cref{proposition:biswas-gomez-lee,proposition:symmetric-power-indecomposability}
are no longer the best known results.
We have included them here as it was our original motivation
for developing the indecomposability technique described in this paper,
and the explicit study of the behaviour of the canonical base locus of~$\Sym^nC$
suggests how to tackle similar questions for other varieties,
cf.~\cref{section:surfaces,section:hilbert-schemes}.

\begin{remark}
  The derived category~$\derived^\bounded(\Sym^nC)$ is \emph{not} indecomposable for~$n\geq g$.
  In \cite[Corollary~5.11]{1805.00183v2} a semiorthogonal decomposition was obtained using wall-crossing methods.
  Remark that~$\Sym^nC$ is also not \emph{birationally minimal} (i.e., the canonical bundle is not nef) in this case. One can check this, e.g., by the arguments of \cite{227059}.

  In fact, the Abel--Jacobi morphism exhibits the symmetric power as a projective bundle
  (of a not necessarily locally free sheaf).
  This description was used in \cite[Theorem~D]{1909.04321v1} and \cite[Corollary~3.8]{1811.12525v4} to give the (same) semiorthogonal decomposition
  \begin{equation}
    \derived^\bounded(\Sym^nC)
    =
    \left\langle
    \derived^\bounded(\Jac C),
    \ldots,
    \derived^\bounded(\Jac C),
    \derived^\bounded(\Sym^{2g-2-n}C)
    \right\rangle
  \end{equation}
  where there are~$n-g+1$ copies of the derived category of the Jacobian.
\end{remark}

\section{Surfaces of general type}
\label{section:surfaces}
To date,
the best indecomposability result for regular surfaces of general type is \cite[Theorem~4.1]{2304.14048}.
The only surfaces for which indecomposability is expected
but not yet known,
are described in \cite[Corollary~4.7]{2304.14048}:
such a surface needs to be minimal,
have~$\HH^1(S,\mathcal{O}_S)=0$ (i.e., it is regular),
$\HH^2(S,\mathcal{O}_S)\neq 0$,
and it must have a~1-dimensional component in the canonical base locus
which cannot be contracted to a point in the category of algebraic spaces.

In this section we will consider two classes of
regular smooth complex projective surfaces of general type
of the sort described in the previous paragraph.
We explain how the canonical base locus for these surfaces varies in families,
in order to understand the indecomposability of their derived categories.

\Cref{subsection:horikawa} concerns the so-called \emph{Horikawa surfaces}.
We summarise what is known about their canonical base loci
and describe the behaviour under degeneration in \Cref{appendix:horikawa}.
\Cref{subsection:ciliberto} is instead concerned with
a series of examples of \emph{double covers of the projective plane} introduced by Ciliberto \cite{MR0778862}.

In both cases we study surfaces $S$ having positive-dimensional canonical base locus,
and our argument for proving the indecomposability of~$\derived^\bounded(S)$
relies on the behaviour of their canonical base loci in families.
This description of the canonical base locus will be in turn used in \Cref{section:hilbert-schemes} to bootstrap indecomposability results for the Hilbert schemes of points $S^{[n]}$.

\subsection{Horikawa surfaces}
\label{subsection:horikawa}
In a series of papers \cite{MR1573789,MR0424831,MR0460340,MR0501370,MR0517773} Horikawa classified minimal surfaces of general type which are on (or close to) the Noether line, i.e.~for which the inequality
\begin{equation}
  \mathrm{c}_1^2\geq 2\pg-4
\end{equation}
is (almost) an equality. In particular, he considered the cases where~$\mathrm{c}_1^2$ equals~$2\pg-4$ \cite{MR0424831}, $2\pg-3$ \cite{MR0460340,MR1573789} or $2\pg-2$ \cite{MR0501370,MR0517773}.
In \Cref{appendix:horikawa}, we summarise the behaviour of the canonical base locus in the classification of Horikawa surfaces.
 It follows from this description that it is possible to realise each Horikawa surface in a family where the general member has empty canonical base locus.
By applying \Cref{theorem:indecomposability}, we thus obtain the following theorem.

\begin{theorem}
  \label{theorem:horikawa-indecomposability}
  Let~$S$ be a Horikawa surface, i.e.~a minimal smooth projective surface of general type with $\mathrm{c}_1^2=2\pg-4$, $2\pg-3$ or~$2\pg-2$.
  If the dimension of the canonical base locus is~1, then furthermore assume \cref{conjecture:ntsod}.
  Then~$\derived^\bounded(S)$ is indecomposable.
\end{theorem}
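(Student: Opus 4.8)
The plan is to realise each Horikawa surface~$S$ as a fibre of a smooth projective family whose general fibre has empty canonical base locus, and then invoke the deformation principle of \Cref{theorem:indecomposability}. The point of this strategy is that, although~$S$ itself may have positive-dimensional~$\Bs|\omega_S|$ --- so that \Cref{theorem:kawatani-okawa} does not apply to~$S$ directly --- the indecomposability of~$\derived^\bounded(S)$ can nonetheless be inferred from that of the nearby surfaces in the family, which will be arranged to have base-point-free canonical system.

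First I would recall Horikawa's classification of minimal surfaces of general type with~$\mathrm{c}_1^2=2\pg-4$, $2\pg-3$ or~$2\pg-2$, organised by type according to the behaviour of the canonical map. For each of the three invariants this yields finitely many families, and the crucial geometric input --- assembled in \Cref{appendix:horikawa} --- is that in each such family the general member~$S_0$ satisfies~$\Bs|\omega_{S_0}|=\emptyset$. Concretely, I would fix an irreducible component of the (Gieseker) moduli space containing the class~$[S]$ and produce a smooth projective family~$f\colon\mathcal{X}\to T$ over an irreducible base~$T$ (working, as throughout this section, over~$\bC$), having~$S$ as a special fibre and a general fibre~$S_0$ of the type above. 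For Horikawa surfaces the relevant families are explicit enough --- many arising as double covers of rational surfaces --- that such a family can be exhibited directly.

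Granting this geometric input, the conclusion is immediate. Since~$\Bs|\omega_{S_0}|=\emptyset$ we have~$\dim\Bs|\omega_{S_0}|=-\infty\leq 0$, so \Cref{theorem:kawatani-okawa} shows that~$\derived^\bounded(S_0)$ is indecomposable. By upper semi-continuity (\Cref{proposition:canonical-base-locus-upper-semicontinuous}) the locus of fibres with empty canonical base locus is open and, containing the general point, dense; hence the subset~$U\subseteq T$ of parameters with indecomposable derived category is dense. \Cref{theorem:indecomposability} then forces~$\derived^\bounded(f^{-1}(t))$ to be indecomposable for \emph{every}~$t\in T$, and in particular~$\derived^\bounded(S)$ is indecomposable. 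Equivalently, one may package the last steps as a single application of \Cref{corollary:zero-dim-Bs} with~$t_0$ a general point.

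The hard part is entirely geometric and is the content of \Cref{appendix:horikawa}: one must go through Horikawa's classification type by type --- across all three values~$2\pg-4$, $2\pg-3$, $2\pg-2$ and their sub-cases --- to confirm both that the general member of each family has base-point-free canonical system and that~$S$ is connected to such a general member within a single smooth family. This rests on Horikawa's detailed description of the canonical map (its degree onto the image, and the precise location of its base points) in each case. Once this is secured, the homological content provided by \Cref{theorem:kawatani-okawa} and \Cref{theorem:indecomposability} is essentially formal.
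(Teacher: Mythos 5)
Your proposal follows the same route as the paper: use Horikawa's classification and his deformation results to place each Horikawa surface~$S$ in a smooth projective family over an irreducible complex base, apply \Cref{theorem:kawatani-okawa} to the general fibre, and conclude for every fibre via \Cref{theorem:indecomposability} (equivalently, via \Cref{corollary:zero-dim-Bs}). There is, however, one concrete overstatement that would make the plan fail if executed literally: you take as the ``crucial geometric input'' that the general fibre~$S_0$ can always be arranged to have $\Bs|\omega_{S_0}|=\emptyset$, i.e.\ base-point-free canonical system. This is unattainable in several of the cases. When $\mathrm{c}_1^2=2\pg-3$ and $\pg=2$ or $\pg\geq 5$, \emph{every} surface of the class has canonical base locus a single point; when $\mathrm{c}_1^2=2\pg-2$ and $\pg=2$, every surface of types I, II, III has non-empty base locus (here $|\omega_S|$ is a pencil, and a base-point-free pencil would force $\mathrm{c}_1^2=0\neq 2$). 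Since $\pg$ and $\mathrm{c}_1^2$ are deformation invariants in characteristic~$0$, and since a non-minimal deformation would even have positive-dimensional base locus (a $(-1)$-curve~$E$ has $\omega_{S_0}\cdot E<0$, hence $E\subseteq\Bs|\omega_{S_0}|$ once $\pg>0$), every smooth deformation of such an~$S$ stays in the same Horikawa class and again has non-empty canonical base locus. So for these classes no family through~$S$ with general fibre of empty base locus exists, and the step ``arrange $\Bs|\omega_{S_0}|=\emptyset$'' cannot be carried out.

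The repair is immediate, and it is what \Cref{appendix:horikawa} actually records: the input you need is only that the general fibre satisfies $\dim\Bs|\omega_{S_0}|\leq 0$, i.e.\ finite or empty base locus, which is all that \Cref{theorem:kawatani-okawa} and \Cref{corollary:zero-dim-Bs} use. With this weakening your argument goes through and coincides with the paper's: the surfaces whose base locus is already finite (all of $\mathrm{c}_1^2=2\pg-4$, and the finite-base-locus types in the other two cases) are handled by \Cref{theorem:kawatani-okawa} directly, with no family needed at all; the degeneration argument is genuinely needed only for the classes whose base locus contains a rational curve~$F$ with $F^2=-2$ (e.g.\ type~III for $\pg=2$, $\mathrm{c}_1^2=2$, or types $\mathrm{IV}_{a\mhyphen2}$, $\mathrm{IV}_{b\mhyphen2}$, $\mathrm{V}_2$ for $\pg=4$), and there Horikawa's results exhibit them as special fibres of families whose general fibre has finite --- not necessarily empty --- canonical base locus.
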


\begin{remark}
  We note that for Horikawa surfaces with empty or finite canonical base locus,
  the indecomposability of~$\derived^\bounded(S)$ follows immediately from \Cref{theorem:kawatani-okawa}.
  For all the others (i.e.,~Horikawa surfaces whose canonical base locus consists of a rational curve $F$ with $F^2=-2$)
  one can deduce \Cref{theorem:horikawa-indecomposability} alternatively by using \cite[Theorem~1.10]{1508.00682v2},
  which asserts that if~$S$ is a minimal surface such that~$\pg\geq 2$ and every one-dimensional connected component of~$\Bs|\omega_S|$
  has negative definite intersection matrix, then~$\derived^\bounded(S)$ is indecomposable.
  This result is indeed applied in \cite[Example~4.6]{1508.00682v2} to Horikawa surfaces of type~III from \cite{MR0517773}.
  However, we will see in \Cref{section:hilbert-schemes} how our degeneration argument
  is crucial to discuss indecomposability of Hilbert schemes of points of Horikawa surfaces.
\end{remark}


\subsection{Ciliberto double planes}
\label{subsection:ciliberto}


In \cite{MR0778862} Ciliberto considered various classes of surfaces of general type,
which exhibit interesting properties in their canonical coordinate ring.
For our purposes we are interested in \cite[Esempio~4.3]{MR0778862},
where for any integer $h\geq 1$, we obtain a family of minimal smooth projective surfaces $S_h$
of general type of geometric genus $\pg(S_h)=2h$,
whose canonical base locus $\Bs|\omega_{S_h}|$ consists of an irreducible curve $F$ with vanishing self-intersection,
and thus \cite[Theorem~1.10]{1508.00682v2} does \emph{not} apply.

The surfaces are obtained as minimal desingularisations of double covers of $\mathbb{P}^2$
branched along certain plane curves with prescribed singularities.
The existence of the branch curves is ensured by the next proposition,
of which we include a proof for the sake of completeness.

As it is customary, given two integers $a\geq b\geq 0$,
we say that a plane curve $C\subset \mathbb{P}^2$ has a point of type $[a,b]$ at $p\in C$
if the curve has a singularity of multiplicity~$a$ at~$p$
and a singularity of multiplicity $b$ at a point infinitely near to $p$.
In particular, the general curve having a point of type $[a,b]$ at $p$
has exactly $a$ smooth branches passing through $p$,
where $b$ of them are simply tangent to the same line
and the remaining $a-b$ branches have distinct tangent directions.

\begin{proposition}
  \label{proposition:existence-of-curve}
  For every integer~$h\geq 1$ there exists an irreducible curve~$C_h\subset\mathbb{P}^2$ of degree~$8h+4$ having
  \begin{itemize}
    \item an ordinary singularity of multiplicity~$4h+2$ at $p_1$,\label{condition2}
    \item an ordinary singularity of multiplicity~$4h$ at $p_2$,\label{condition3}
    \item four singularities of type~$[2h+1,2h+1]$ at $p_3,\ldots,p_6$,\label{condition4}
  \end{itemize}
 and no other singularities, where the points~$p_1,\ldots,p_6$ lie on an irreducible conic~$\Gamma\subset \mathbb{P}^2$ transverse to $C_h$ at any $p_i$.
\end{proposition}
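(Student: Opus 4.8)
The plan is to realise $C_h$ as a general member of an explicit linear system with assigned base conditions, and then to invoke Bertini's theorem on a suitable blow-up in order to control both its irreducibility and its singularities; this follows the classical strategy underlying Ciliberto's original argument. First I would fix the data: choose a smooth conic $\Gamma\subset\mathbb{P}^2$, six points $p_1,\dots,p_6$ in general position on $\Gamma$, and, for $i=3,\dots,6$, a tangent direction $\ell_i$ at $p_i$ chosen generically, so that in particular $\ell_i\neq T_{p_i}\Gamma$ (this will guarantee the transversality of $\Gamma$ asserted in the statement). Let $\mathcal{L}\subseteq\mathbb{P}(\HH^0(\mathbb{P}^2,\mathcal{O}_{\mathbb{P}^2}(8h+4)))$ be the linear system of curves of degree $8h+4$ having multiplicity at least $4h+2$ at $p_1$, multiplicity at least $4h$ at $p_2$, and, at each $p_i$ with $i\geq 3$, multiplicity at least $2h+1$ together with the infinitely near condition prescribing a point of multiplicity $2h+1$ in the direction $\ell_i$, so that membership in $\mathcal{L}$ already forces the local type $[2h+1,2h+1]$ there. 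All these conditions are linear, and a standard parameter count gives the expected dimension $8h+3$.

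Next I would show that $\mathcal{L}$ is non-empty (and of the expected dimension) by exhibiting a degenerate member. A convenient choice is $C_0=2h\,\Delta+Q$, where $\Delta$ is an irreducible quartic with nodes at $p_1,p_2$ passing through $p_3,\dots,p_6$ with tangent lines $\ell_i$, and $Q$ is an auxiliary quartic with a node at $p_1$, avoiding $p_2$, and passing through $p_3,\dots,p_6$ with tangent lines $\ell_i$. One checks that $\mult_{p_1}C_0=4h+2$ and $\mult_{p_2}C_0=4h$, and that at each $p_i$ ($i\geq 3$) the divisor $C_0$ has tangent cone $\ell_i^{2h+1}$, so that its strict transform meets the exceptional line at a single point of multiplicity $2h+1$; hence $C_0\in\mathcal{L}$. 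The existence of $\Delta$ and $Q$ reduces to elementary dimension counts for quartics. This member also pins down $\dim\mathcal{L}=8h+3$ by upper semicontinuity, matching the count used in the main text.

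Finally I would run the Bertini argument. Let $\rho\colon\widetilde{\mathbb{P}}^{2}\to\mathbb{P}^2$ be the iterated blow-up of $\mathbb{P}^2$ at $p_1,\dots,p_6$ and at the four infinitely near points determined by $\ell_3,\dots,\ell_6$, and let $\widetilde{\mathcal{L}}$ be the strict-transform system on $\widetilde{\mathbb{P}}^{2}$. The assigned conditions become the requirement that members of $\widetilde{\mathcal{L}}$ contain the exceptional configuration with the prescribed multiplicities; for general points and tangent directions, $\widetilde{\mathcal{L}}$ has no further base points, so after removing the fixed exceptional part it is base-point free away from the exceptional locus. Bertini's theorem then shows that the general member $\widetilde{C}$ is irreducible and smooth outside the exceptional locus, whence its image $C_h=\rho(\widetilde{C})$ is irreducible, smooth away from $p_1,\dots,p_6$, has ordinary singularities of multiplicities $4h+2$ and $4h$ at $p_1,p_2$, and has exactly type $[2h+1,2h+1]$ at each $p_i$ with $i\geq 3$. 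Since $\ell_i\neq T_{p_i}\Gamma$ and $\Gamma$ is not a component of $C_h$, the conic meets $C_h$ transversally at every $p_i$, as required.

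The main obstacle is this last step: guaranteeing that the general member has exactly the prescribed singularities and nothing worse. Concretely, one must ensure that at each $p_i$ ($i\geq 3$) the infinitely near point is an \emph{ordinary} $(2h+1)$-fold point, so that the type does not degenerate to higher tangency or to a non-reduced branch, that no unassigned singularities appear on $\mathbb{P}^2\setminus\{p_1,\dots,p_6\}$, and that $\widetilde{C}$ is genuinely irreducible rather than merely connected. This is exactly what the base-point-freeness of $\widetilde{\mathcal{L}}$ outside the exceptional locus, combined with the dimension count anchored by the explicit member $C_0$, is meant to deliver; the delicate part is the local analysis on the successive blow-ups over $p_3,\dots,p_6$, where one has to verify that the tangent directions of the $2h+1$ branches separate after the blow-up in the direction $\ell_i$.
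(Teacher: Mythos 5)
Your overall strategy (an explicit degenerate member of the linear system, then Bertini on the blow-up) is the same one underlying the paper's proof, and your member $C_0=2h\,\Delta+Q$ does satisfy the linear conditions defining $\mathcal{L}$. But there is a genuine gap at exactly the step you flag as delicate. You assert that for general points and directions the strict-transform system $\widetilde{\mathcal{L}}$ is base point free away from the exceptional locus, and you deduce irreducibility of the general member from this ``by Bertini''. Neither half is justified. First, \emph{every} member you exhibit contains $\Delta$ as a component (as $Q$ varies, $C_0$ always contains $2h\,\Delta$), so your construction shows at best that the base locus away from the assigned points is contained in $\Delta\cup\Bs\{Q\}$; you cannot conclude base-point freeness, nor even that $\mathcal{L}$ has no fixed component, without producing a member \emph{not} containing $\Delta$. (For the same reason, exhibiting $C_0$ does not ``pin down'' $\dim\mathcal{L}=8h+3$; that equality is a separate argument, carried out in the paper via Riemann--Roch on the blow-up, and is in any case not needed for this proposition.) Second, even granting base-point freeness, Bertini does not by itself give irreducibility: a base-point-free system can be composed with a pencil, in which case every member is reducible (e.g.\ $|2F|$ for a fibre $F$ of a ruled surface). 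One must additionally rule out being composed with a pencil --- for instance by observing that on the blow-up the general member has $D^2=16h+4>0$, whereas a system composed with a base-point-free pencil has $D^2=0$ --- and your proposal never addresses this.

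This missing input is precisely what the paper's proof supplies: a second explicit member $B=4h\,\Gamma+\Sigma+\ell$ (with $\Sigma$ a cubic tangent to $\Delta$ at $p_3,\ldots,p_6$ and $\ell$ a general line through $p_1$) sharing no component with $A=(2h+1)\Delta$. The pencil $\langle A,B\rangle\subset\mathcal{L}_h$ then has no fixed curve, its general member is smooth at the unassigned base points because $B$ is, and Bertini's second theorem reduces irreducibility to showing this pencil is not composed with a pencil, which the paper excludes by the explicit argument involving the points of $\Delta\cap\ell$. A secondary, but real, issue is your order of choices: you fix the six points and the four tangent directions $\ell_i$ generically first, and only then require an irreducible two-nodal quartic $\Delta$ tangent to each $\ell_i$ at $p_i$; these are $3+3+8=14$ linear conditions on the $14$-dimensional space of quartics, so while the linear system is non-empty, producing an \emph{irreducible} member with exactly two nodes at expected dimension zero is not an ``elementary dimension count''. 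The paper sidesteps this entirely by choosing $\Delta$ and the conic $\Gamma$ first and \emph{defining} $p_3,\ldots,p_6$ as the residual intersection $\Gamma\cap\Delta$ and $\ell_i=T_{p_i}\Delta$, so that all auxiliary curves exist by construction; if you want to keep your order of choices, you would need a specialisation or independence argument to recover the existence of $\Delta$.
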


\begin{proof}
Let $\Delta\subset \mathbb{P}^2$ be an irreducible quartic having two nodes at $p_1,p_2\in \mathbb{P}^2$.
Let $\Gamma\subset \mathbb{P}^2$ be an irreducible conic passing through $p_1,p_2$ and intersecting $\Delta$ transversely at four other points $p_3,\dots,p_6$.
Let $\mathcal{L}_h$ be the linear system of curves of degree $8h+4$ having (at least) a $(4h+2)$-tuple point at $p_1$, a $4h$-tuple point at $p_2$, and points of type $[2h+1,2h+1]$ at $p_3,\dots,p_6$ with tangent directions given by the tangent lines of $\Delta$ at $p_3,\dots,p_6$.
We point out that $\mathcal{L}_h$ contains also curves having worse singularities, which are obtained by degeneration of the singularities at $p_1,\dots,p_6$. 

We need to prove that $\mathcal{L}_h$ contains irreducible curves.
To this aim we define two degree $8h+4$ curves $A:= (2h+1)\Delta$ and $B:= 4h\Gamma+\Sigma+\ell$, where $\ell$ is a general line through $p_1$, and $\Sigma$ is a cubic passing through $p_1$ and tangent to $\Delta$ at $p_3,\dots,p_6$, which exists by a dimension count.
Therefore, $A$ has points of multiplicity $4h+2$ at $p_1$ and $p_2$, and points of type $[2h+1,2h+1]$ at $p_3,\dots,p_6$, so that $A\in \mathcal{L}_h$. 
As far as the curve $B$ is concerned, it possesses a $(4h+2)$-tuple point at $p_1$ and a $4h$-tuple point at $p_2$.
Moreover, let $\ell_i$ denotes the tangent line to $\Delta$ at $p_i$, with $i=3,\dots,6$.
Then each $\ell_i$ is tangent to $\Sigma$ at $p_i$, and it meets the non-reduced quartic $2\Gamma$ at $p_i$ with multiplicity 2.
Since $B$ contains $2h$ copies of $2\Gamma$, we conclude that $B$ has (a degeneration of) a point of type $[2h+1,2h+1]$ at $p_3,\dots,p_6$ with tangent directions given by $\ell_3,\dots,\ell_6$, so that $B\in \mathcal{L}_h$.

Then we consider the pencil $\Phi:=\alpha A+\beta B$ of curves of $\mathcal{L}_h$. Since $\Phi$ has no fixed curves, Bertini's second theorem implies that the general curve of the pencil is reducible if and only if $\Phi$ is composed with a pencil, i.e. if there exists a pencil~$\Psi=\lambda C+\mu C'$ and~$n\geq 2$ such that~$A=C_1\cup\cdots\cup C_n$ and~$B=C_1'\cup\cdots\cup C'_n$ for some~$C_j,C_j'\in\Psi$.
Suppose by contradiction that this is the case.
Hence we have that $C_1=k\Delta$ for some integer $k$, and we can set $C_1'$ to be the component of $B$ containing $\ell$.
Then $\Psi=\lambda C_1+\mu C'_1$, and any curve of $\Psi$ must pass through $C_1\cap C'_1$.
However, $C'_1$ is the \emph{unique} component of $B$ passing through all the points of $\Delta\cap\ell$, a contradiction.

Furthermore, the general curve of $\Phi$ is smooth at the base points outside $p_1,\ldots,p_6$, because $B$ is.
Thus we conclude by Bertini's theorem that $\mathcal{L}_h$ contains irreducible curves having singularities only at $p_1,\dots,p_6$, and the assertion follows.
\end{proof}

Fixing an integer $h\geq 1$, we consider a curve $C_h\subset \mathbb{P}^2$ as in \Cref{proposition:existence-of-curve} and the double plane $\pi'\colon S_h'\to \mathbb{P}^2$ branched along $C_h$.
Then we define the surface $S_h$ to be the minimal desingularisation of $S'_h$,
which is endowed with a generically finite morphism of degree 2 induced by $\pi'$,
\begin{equation}
  \pi\colon S_h\to \mathbb{P}^2.
\end{equation}

\begin{remark}
\label{remark:K_S_h}
The image under $\pi$ of the canonical linear series $|\omega_{S_h}|$ is the linear system $\mathcal{D}_h$ of curves $D_h\subset \mathbb{P}^2$ of degree $4h-1$ having a $2h$-tuple point at $p_1$, a $(2h-1)$-tuple point at $p_2$, and points of type $[h,h-1]$ at $p_3,\dots,p_6$ (cfr. e.g. \cite[Theorem III.7.2]{MR2030225}).
Therefore, the conic $\Gamma$ is a fixed component of $\mathcal{D}_h$ by B\'ezout's theorem.
Thus the curve $F\colonequals \pi^{-1}(\Gamma)$ is a fixed curve of $|\omega_{S_h}|$, which can be proved to be a smooth irreducible curve of genus $2$ with $F^2=0$.

So the curves in $\mathcal{D}_h$ are of the form $D_h=\Gamma + E_h$, where $E_h\subset \mathbb{P}^2$ varies in the linear system $\mathcal{E}_h$ of curves of degree $4h-3$ with a $(2h-1)$-tuple point at $p_1$, a $(2h-2)$-tuple point at $p_2$, and points of type $[h-1,h-1]$ at $p_3,\dots,p_6$.
Therefore, counting conditions imposed by the prescribed singularities of $E_h$, we deduce
\begin{equation}
\label{eq:dim E_h}
\dim \mathcal{E}_h\geq\frac{(4h-3)4h}{2}-\frac{(2h-1)2h}{2}-\frac{(2h-2)(2h-1)}{2}-4\cdot2\frac{(h-1)h}{2}=2h-1,
\end{equation}
and noting that the curves of $\mathcal{E}_h$ passing through two extra points $p_7,p_8\in \Gamma$ are of the form $2\Gamma + E_{h-1}$, one concludes by induction on $h$ that \eqref{eq:dim E_h} is actually an equality.
  Thus the geometric genus of $S_h$ is $\pg(S_h)=\dim \mathcal{D}_h +1=\dim \mathcal{E}_h +1=2h$,
  whereas the irregularity is $\qq(S_h)=0$ (see e.g. \cite[Section V.22]{MR2030225}).
\end{remark}

Finally, we consider the linear system $\mathcal{L}_h$ appearing in the proof of \Cref{proposition:existence-of-curve}, consisting of plane curves of degree $8h+4$ having a $(4h+2)$-tuple point at $p_1$, a $4h$-tuple point at $p_2$, and points of type $[2h+1,2h+1]$ at $p_3,\dots,p_6$ (with fixed tangent directions), where $p_1,\dots,p_6$ lie on the irreducible conic $\Gamma$.
In particular, we compute the dimension of $\mathcal{L}_h$, which is involved in the degeneration argument of \Cref{subsection:ciliberto}.

\begin{lemma}
\label{lemma:dimension-L}
Given an integer $h\geq 1$ and using the notation above, the linear system $\mathcal{L}_h$ has dimension $8h+3$.
\end{lemma}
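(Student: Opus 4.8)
The plan is to establish the equality by combining a lower bound from naive counting of linear conditions with a matching upper bound obtained via a degeneration/restriction argument to the conic $\Gamma$. First I would record the lower bound. The space $\HH^0(\mathbb{P}^2,\mathcal{O}_{\mathbb{P}^2}(8h+4))$ has dimension $\binom{8h+6}{2}=\frac{(8h+6)(8h+5)}{2}$. A $(4h+2)$-tuple point at $p_1$ imposes at most $\binom{4h+3}{2}=\frac{(4h+2)(4h+3)}{2}$ conditions, a $4h$-tuple point at $p_2$ at most $\binom{4h+1}{2}=\frac{4h(4h+1)}{2}$, and each point of type $[2h+1,2h+1]$ imposes at most $\binom{2h+2}{2}+\binom{2h+2}{2}$... more carefully, a point of type $[a,a]$ (here $a=2h+1$) imposes at most $\binom{a+1}{2}+a$ conditions (the ordinary $a$-tuple point plus the tangency of the infinitely near point). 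Subtracting these counts gives $\dim\mathcal{L}_h\geq 8h+3$, exactly as in the computation \eqref{eq:dim L_h} that the paper references.

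For the upper bound, the key idea is to exploit the constraint that all six points $p_1,\ldots,p_6$ lie on the irreducible conic $\Gamma$. I would restrict a curve $C\in\mathcal{L}_h$ to $\Gamma$ and analyze the forced intersection multiplicities. By B\'ezout, $C\cap\Gamma$ has total degree $2(8h+4)=16h+8$. At $p_1$ the local intersection multiplicity is at least $4h+2$ (since $\Gamma$ is transverse to the $(4h+2)$-fold point, each of the smooth branches meets $\Gamma$), at $p_2$ it is at least $4h$, and at each of $p_3,\ldots,p_6$ the point of type $[2h+1,2h+1]$ with tangent line forced by the tangency to $\ell_i$ contributes intersection multiplicity at least $2(2h+1)=4h+2$ against a transverse conic, or possibly more because of the tangential infinitely-near condition. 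Summing: $(4h+2)+4h+4(4h+2)=24h+10>16h+8$ for $h\geq 1$, which forces $\Gamma$ to be a component of every $C\in\mathcal{L}_h$. Thus $C=\Gamma\cup C'$ with $C'$ of degree $8h+2$, and I would read off the residual singularity conditions on $C'$: subtracting the intersection of $\Gamma$ with the multiplicities drops each point's required multiplicity by one, yielding a residual linear system of curves of degree $8h+2$ with a $(4h+1)$-point, a $(4h-1)$-point, and four points of type $[2h,2h]$.

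This sets up an \emph{induction on $h$}, which I expect to be the main engine of the proof: the residual system is of the same shape as $\mathcal{L}_{h-1}$ but shifted, so I would show $\dim\mathcal{L}_h=\dim(\text{residual system})$ and that stripping off $\Gamma$ repeatedly reduces to a base case $h=0$ (or $h=1$) where the dimension can be computed directly and shown to equal $8\cdot 0+3=3$ (respectively $11$). The delicate point — and the main obstacle — is verifying that each singularity condition is \emph{independent}, i.e.\ that the naive count is sharp, precisely because the points are special (constrained to lie on $\Gamma$). The genericity built into \Cref{proposition:existence-of-curve} (transversality of $\Gamma$ to the branches, and tangent directions $\ell_i$ inherited from a fixed quartic $\Delta$) is exactly what guarantees no unexpected drop in the number of conditions; I would need to check at each inductive step that the residual tangency conditions remain independent, which is the heart of the argument and the place where the hypothesis that $\Gamma$ is transverse to $C_h$ is essential.
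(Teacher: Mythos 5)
Your lower bound is the right idea and coincides with the paper's computation \eqref{eq:dim L_h}, although your ``more careful'' count for a point of type $[a,a]$ is off: such a point imposes at most $\binom{a+1}{2}+\binom{a+1}{2}$ conditions (multiplicity $a$ at the point and multiplicity $a$ at the fixed infinitely near point), not $\binom{a+1}{2}+a$; only the former count yields the bound $8h+3$.

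The upper bound argument, however, contains a fatal error. You claim that each point of type $[2h+1,2h+1]$ at $p_i$ contributes local intersection multiplicity at least $2(2h+1)=4h+2$ with $\Gamma$. This would hold only if $\Gamma$ passed through the infinitely near point, i.e.\ only if $\Gamma$ were tangent to $\ell_i$ at $p_i$. But the hypotheses say exactly the opposite: $\ell_i$ is the tangent line of the quartic $\Delta$, which meets $\Gamma$ transversely at $p_i$, so the strict transform of $\Gamma$ misses the infinitely near point and the local intersection number is exactly $2h+1$. The correct Bézout count is therefore $(4h+2)+4h+4(2h+1)=16h+6\leq 16h+8=\deg C_h\cdot\deg\Gamma$, and no contradiction arises: $\Gamma$ is \emph{not} a fixed component of $\mathcal{L}_h$. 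Indeed, your conclusion would contradict \Cref{proposition:existence-of-curve}, which asserts that $\mathcal{L}_h$ contains irreducible curves of degree $8h+4$; the transversality hypothesis you call ``essential'' is precisely what destroys your argument rather than what saves it. Since the whole induction is built on stripping off $\Gamma$, the proposal collapses at this step. (Compare with the system $\mathcal{D}_h$ in \Cref{remark:K_S_h}, where the count $2h+(2h-1)+4h=8h-1>8h-2$ does force $\Gamma$ to be fixed; the phenomenon you try to exploit occurs there, not for $\mathcal{L}_h$.) The paper's upper bound is obtained quite differently: one blows up the six points and the four infinitely near points, takes the strict transform $D$ of a general irreducible curve in $\mathcal{L}_h$, which is smooth of genus $g(D)=8h+2$ by adjunction (using $D^2=16h+4$ and $D\cdot\mathrm{K}_X=-2$), and then bounds the characteristic series cut by $\mathcal{L}_h^*$ on $D$ via Riemann--Roch: $\dim\mathcal{L}_h^*|_D\leq\deg\mathcal{L}_h^*|_D-g(D)=8h+2$, whence $\dim\mathcal{L}_h\leq 8h+3$.
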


\begin{proof}
Arguing as in \eqref{eq:dim E_h}, we note that
\begin{equation}
\begin{aligned}
\label{eq:dim L_h}
\dim \mathcal{L}_h & \geq\frac{(8h+4)(8h+7)}{2}-\frac{(4h+2)(4h+3)}{2}-\frac{4h(4h+1)}{2}\\
& -4\cdot2\frac{(2h+1)(2h+2)}{2}
 =8h+3.
\end{aligned}
\end{equation}

In order to prove that \eqref{eq:dim L_h} is actually an equality, we consider a general element $C=C_h$ of $\mathcal{L}_h$ as in \Cref{proposition:existence-of-curve} and the blowup~$\widetilde{X}\colonequals \Bl_{p_1,\ldots,p_6}\mathbb{P}^2$ of~$\mathbb{P}^2$ at the points~$p_1,\ldots,p_6$, with exceptional divisors by~$\widetilde{E}_1,\ldots,\widetilde{E}_6$.
Let $q_3,\dots,q_6$ be the points on~$\widetilde{E}_3,\ldots,\widetilde{E}_6$ corresponding to the common tangent directions of the~$2h+1$ branches of~$C$ at $p_3,\dots,p_6$, and let~$X\colonequals \Bl_{q_3,\ldots,q_6}\widetilde{X}$ be the blowup at these points, with exceptional divisors by~$F_3,\ldots,F_6$.
We denote by $E_1,\dots,E_6\subset X$ the strict transforms of $\widetilde{E}_1,\ldots,\widetilde{E}_6$, so that $E_1^2=E_2^2=-1$, $E_3^2=\dots=E_6^2=-2$ and $E_i\cdot F_j=\delta_{ij}$.

Let $\mathcal{L}_h^*$ be the linear system obtained on $X$ by taking strict transforms of the curves in $\mathcal{L}_h$, and let $D\in \mathcal{L}_h^*$ be the strict transform of $C$, which is a smooth irreducible curve, as the singularities of $C$ have been resolved by the sequence of blowups.
Denoting by $H$ the class of the strict transform of a line in $\mathbb{P}^2$, the numerical equivalence classes of a canonical divisor $\mathrm{K}_X$ and $D$ can be easily computed to be
\begin{equation}
  \begin{aligned}
    \mathrm{K}_X&\equiv -3H+\sum_{i=1}^6E_1+2\sum_{j=3}^6F_j \\
    D&\equiv (8h+4)H-(4h+2)E_1-4hE_2-(2h+1)\sum_{j=3}^6E_j-2(2h+1)\sum_{j=3}^6F_j.
  \end{aligned}
\end{equation}
Hence~$D^2=16h+4$ and~$D\cdot\mathrm{K}_X=-2$, so that $g(D)=1+\frac12(D^2+D\cdot\mathrm{K}_X)=8h+2$ by the adjunction formula.

Finally, we consider the characteristic linear series~$\mathcal{L}_h^*|_D$ cut out on~$D$ by the other curves in~$\mathcal{L}_h^*$, so that
\begin{equation}
\deg \mathcal{L}_h^*|_D=D^2=16h+4=2g(D) \quad\text{and}\quad \dim \mathcal{L}_h^*|_D=\dim  \mathcal{L}_h^*-1\geq 8h+2
\end{equation}
by \eqref{eq:dim L_h}.
On the other hand, the Riemann--Roch theorem yields that
\begin{equation}
\dim \mathcal{L}_h^*|_D\leq \deg \mathcal{L}_h^*|_D-g(D)=8h+2.
\end{equation}
Thus we conclude that $\dim \mathcal{L}_h=\dim \mathcal{L}_h^*=\dim  \mathcal{L}_h^*|_D+1= 8h+3$, as claimed.
\end{proof}

\begin{theorem}
  \label{theorem:ciliberto-family}
  Let~$h\geq 1$ be an integer and let $C_h\subset \mathbb{P}^2$ be an irreducible curve of degree $8h+4$, having a $(4h+2)$-tuple ordinary point at $p_1$, a $4h$-tuple ordinary point at $p_2$, four points of type $[2h+1,2h+1]$ at $p_3,\dots,p_6$, and no other singularities, where $p_1,\dots,p_6\in \mathbb{P}^2$ lie on an irreducible conic transverse to $C_h$ at any $p_i$.
 Let~$S_h$ be the minimal desingularisation of the double plane~$S'_h\to \mathbb{P}^2$ branched along~$C_h$.
 Then~$S_h$ can be put in a family
 such that the canonical base locus of a general element has dimension at most~0.
\end{theorem}

\begin{proof}
  We want to construct a smooth family $f\colon\mathcal{X}\to T$ of projective surfaces such that $\dim \Bs|\omega_{f^{-1}(t)}|\leq 0$ and $f^{-1}(t')=S_h$ for some $t,t'\in T$.

  Let $Y\subset \mathbb{P}^2\times \mathbb{G}(1,2)$ be
  the universal family of the Grassmannian of lines in $\mathbb P^2$,
  and let $U\subset \mathbb{P}^2\times \mathbb{P}^2\times \Sym^4 Y$
  be the irreducible open subset parametrising tuples $u=\left(x_1,x_2,(x_3,[\ell_3])+\dots+(x_6,[\ell_6])\right)$
  such that the points $x_1,\dots,x_6$ are distinct,
  no three of them lie on a line,
  and there exists an irreducible quartic with nodes at $x_1,x_2$
  and tangent to each $\ell_i$ at $x_i$.\footnote{The latter is an open condition on $\mathbb{P}^2\times \mathbb{P}^2\times \Sym^4 Y$
    which ensures that,
    although the points $x_1,\dots,x_6$ may lie on an irreducible conic,
    the tangent directions are sufficiently general,
    as in \Cref{proposition:existence-of-curve}.
  }
For any $u\in U$, let $\mathcal{L}_u\subset \mathbb{P}\left(\HH^0\left(\mathbb{P}^2,\mathcal{O}_{\mathbb{P}^2}(8h+4)\right)\right)$ be the linear system of plane curves of degree $8h+4$ having a $(4h+2)$-tuple point at $x_1$, a $4h$-tuple point at $x_2$, four points of type $[2h+1,2h+1]$ at $x_3,\dots,x_6$, each with tangent line $\ell_i$.
Consider the incidence variety
\begin{equation}
  W\colonequals\left\{ ([F],u)\in \mathbb{P}\left( \HH^0(\mathbb{P}^2,\mathcal{O}_{\mathbb{P}^2}(8h+4)) \right)\times U \mid C=\mathrm{V}(F)\in \mathcal{L}_u \right\},
\end{equation}
For any $u\in U$ the fibre via the second projection $\pi_2\colon W\to U$ is $\pi_2^{-1}(u)\cong \mathcal{L}_u$.
As in \eqref{eq:dim L_h}, $\dim \mathcal{L}_u\geq 8h+3$ and \Cref{lemma:dimension-L} ensures that this is an equality when $x_1,\dots, x_6$ lie on an irreducible conic.
Then we deduce by semi-continuity that $\dim \mathcal{L}_u= 8h+3$ also for points $x_1,\dots, x_6$ in general position.
Being $\pi_2$ a surjective map with fibres $\pi_2^{-1}(u)\cong \mathbb{P}^{8h+3}$ over the irreducible variety $U$, we conclude that $W$ is itself irreducible.
Therefore, for general $w=\left([F],u\right)\in W$, the curve $C_w\colonequals\mathrm{V}(F)\in \mathcal{L}_u$ is irreducible and its only singular points are a $(4h+2)$-tuple ordinary point at $x_1$, a $4h$-tuple ordinary point at $x_2$, and four points of type $[2h+1,2h+1]$ at $x_3,\dots,x_6$, because this happens when the points $x_1,\dots,x_6$ are special, as we check in \Cref{proposition:existence-of-curve}.

Let $T\subset W$ be the locus parametrising irreducible curves $C_w$ as above.
Then we can define a family of surfaces $f'\colon \mathcal{X}'\to T$ such that for any $t\in T$, the fibre $X'_t=(f')^{-1}(t)$ is the double covering of $\mathbb{P}^2$ branched over the curve $C_t$.
Up to base-changing and shrinking $T$, we then obtain a family $f\colon\mathcal{X}\to T$ of smooth surfaces of general type, where $X_t=f^{-1}(t)$ is a minimal desingularisation of $X'_t$.
In particular, when the singularities of the curve $C_t$ lie on an irreducible conic, we obtain a surface $S_h$ as in the assertion.
Thus it remains to show that there exists $t\in T$ such that $\Bs|\omega_{X_t}|$ does not contain a curve.

To this aim, we consider a general point $t\in T$ and we set $u\colonequals \pi_2(t)$. 
Let $\phi\colon X_t\to \mathbb{P}^2$ be the generically finite morphism of degree 2 induced by the double covering $X'_t\to \mathbb{P}^2$.
As in \Cref{remark:K_S_h}, the image under $\phi$ of the curves in the canonical linear system $|\omega_{X_t}|$ is the linear system $\mathcal{D}_h$ of curves $D\subset \mathbb{P}^2$ of degree $4h-1$ having a $2h$-tuple point at $x_1$, a $(2h-1)$-tuple point at $x_2$ and four points of type $[h,h-1]$ at $x_3,\dots,x_6$.
Hence $|\omega_{X_t}|$ possesses a fixed curve if and only if $\mathcal{D}_h$ does.

However, if $h=1$, then $\mathcal{D}_1$ is the pencil of cubics having a node at $x_1$ and passing through $x_2,\dots,x_6$, whose general element is indeed an irreducible curve.

When $h\geq 2$, we argue as in \eqref{eq:dim E_h}, and we deduce
\begin{equation}
\begin{aligned}
\dim \mathcal{D}_h \geq&\frac{(4h-1)(4h+2)}{2}-\frac{2h(2h+1)}{2}-\frac{(2h-1)2h}{2} \\ & - 4\left(\frac{h(h+1)}{2}+\frac{(h-1)h}{2}\right)=2h-1.
\end{aligned}
\end{equation}
On the other hand, by \Cref{remark:K_S_h}, the latter is an equality when the points $x_1,\ldots,x_6$ lie on an irreducible conic, hence $\dim\mathcal{D}_h= 2h-1$ by semicontinuity.
Moreover, we recall that for such a general $u= \left(x_1,x_2,(x_3,[\ell_3])+\dots+(x_6,[\ell_6])\right)$, there exists an irreducible quartic $\Delta\subset \mathbb{P}^2$ having nodes at $x_1,x_2$ and passing through $x_3,\dots,x_6$ with tangent lines $\ell_3,\dots,\ell_6$.
Therefore, for any cubic $D\in \mathcal{D}_1$ as above, the curve $D' \colonequals (h-1)\Delta+D$ belongs to $\mathcal{D}_h$.
Hence $\mathcal{D}_h$ has a fixed curve if and only if $\Delta\subset D_h$ for any $D_h\in \mathcal{D}_h$.
If this were the case, residual curves would be such that $\overline{D_h\setminus\Delta}\in \mathcal{D}_{h-1}$, and hence $2h-1=\dim \mathcal{D}_h=\dim\mathcal{D}_{h-1}=2(h-1)-1$,  a contradiction.
Then $\mathcal{D}_h$ has no fixed curves for any $h\geq 2$.
\end{proof}

\begin{corollary}
  \label{corollary:ciliberto-indecomposability}
  Let~$S_h$ be as in \cref{theorem:ciliberto-family}.
  and assume that \cref{conjecture:ntsod} holds.
  Then~$\derived^\bounded(S_h)$ is indecomposable.
\end{corollary}

\begin{proof}
  For the family in the proof of \cref{theorem:ciliberto-family}
  we conclude that for generic~$t$ the canonical base locus $\Bs|\omega_{X_t}|$ does not contain curves,
  and the assertion follows from \Cref{corollary:zero-dim-Bs}.
\end{proof}

\begin{remark}
Let $S_h$ be a surface as in \Cref{theorem:ciliberto-family}.
It follows from the analysis of \cite[Esempio~4.3]{MR0778862} that the fixed part of $|\omega_{S_h}|$ is a genus 2 curve $F$ such that $F^2=0$.
Therefore, unlike the case of Horikawa surfaces, it is not possible to apply \cite[Theorem~1.10]{1508.00682v2} to achieve the indecomposability of $\derived^\bounded(S_h)$, and we do not know alternative arguments for proving \Cref{theorem:ciliberto-family}.
\end{remark}

\begin{remark}\label{remark:ciliberto-empty}
  By \cref{theorem:ciliberto-family} the indecomposability conclusion in \cref{corollary:ciliberto-indecomposability}
  holds even for surfaces $S_h$ obtained when the points $p_1,\dots,p_6\in \mathbb{P}^2$ are in general position.
  Indeed, they are the surfaces $X_t$ with $\dim\Bs|\omega_{X_t}|\leq 0$
  appearing in the proof, so that $\derived^\bounded(X_t)$ is indecomposable by \Cref{theorem:kawatani-okawa}.
  Furthermore, one can easily check that for general $t\in T$, the canonical base locus $\Bs|\omega_{X_t}|$ is empty;
  we will use this fact in the next section to deduce that the Hilbert scheme of points $\hilbn{n}{S_h}$
  of the surface $S_h$ has indecomposable derived category.
\end{remark}

\section{Hilbert schemes of points on surfaces}
\label{section:hilbert-schemes}
Given a smooth projective surface $S$, let $\hilbn{n}{S}$ denote its Hilbert scheme of points.
It provides an interesting smooth projective variety (of dimension~$2n$),
obtained as a crepant resolution of the symmetric power of the surface~$S$.
An affirmative answer to \Cref{question:indecomposability}
suggests that for a surface~$S$ with~$\omega_S$ nef and~$\HH^0(S,\omega_S)\neq 0$
both~$\derived^\bounded(S)$
and~$\derived^\bounded(\hilbn{n}{S})$ for~$n\geq 2$ are indecomposable.
The following proposition is our starting point for the study of indecomposability of the Hilbert schemes of points on surfaces.
\begin{proposition}
  \label{proposition:canonical-base-locus-hilbert-schemes}
  Let~$S$ be a smooth projective surface. Let~$n\geq 1$.
  If~$\Bs|\omega_S|=\emptyset$, then~$\Bs|\omega_{\hilbn{n}{S}}|=\emptyset$.
\end{proposition}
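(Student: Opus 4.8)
The plan is to factor the problem through the symmetric product $\Sym^n S$ by means of the Hilbert--Chow morphism $\rho\colon\hilbn{n}{S}\to\Sym^n S$, reducing base-point-freeness upstairs to base-point-freeness on $\Sym^n S$, and then to manufacture non-vanishing canonical sections on $\Sym^n S$ directly out of canonical sections on $S$. Recall that $\Sym^n S$ is a normal Gorenstein variety with canonical singularities, so $\omega_{\Sym^n S}$ is a genuine line bundle, and $\rho$ is a crepant resolution with rational singularities. Thus $\omega_{\hilbn{n}{S}}\cong\rho^*\omega_{\Sym^n S}$, and the projection formula together with $\rho_*\cO_{\hilbn{n}{S}}\cong\cO_{\Sym^n S}$ gives an isomorphism $\HH^0(\hilbn{n}{S},\omega_{\hilbn{n}{S}})\cong\HH^0(\Sym^n S,\omega_{\Sym^n S})$ under which every section upstairs is a pullback $\rho^*s$. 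Since $\omega_{\Sym^n S}$ is a line bundle and $\rho$ is surjective, $\rho^*s$ vanishes at $\xi$ precisely when $s$ vanishes at $\rho(\xi)$; hence $\Bs|\omega_{\hilbn{n}{S}}|=\rho^{-1}\bigl(\Bs|\omega_{\Sym^n S}|\bigr)$, and it suffices to prove $\Bs|\omega_{\Sym^n S}|=\emptyset$.

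Next I would pass to the quotient $q\colon S^n\to\Sym^n S$ by $\symmetric_n$. Because $S$ is a \emph{surface}, every transposition in $\symmetric_n$ fixes a diagonal of codimension $\dim S=2$ in $S^n$, so $\symmetric_n$ acts without pseudo-reflections; consequently there is no ramification divisor and $q^*\omega_{\Sym^n S}\cong\omega_{S^n}$ as $\symmetric_n$-equivariant sheaves. Therefore global canonical sections on $\Sym^n S$ are exactly the $\symmetric_n$-invariant sections of $\omega_{S^n}=\omega_S^{\boxtimes n}$. I note that the $\symmetric_n$-action on $\omega_{S^n}$ carries no sign, since permuting the even-dimensional blocks $\bigwedge^2\Omega^1_S$ is sign-free --- this is the point where the surface case genuinely differs from the curve case treated in \Cref{section:symmetric-products}.

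To produce the section, fix $y\in\Sym^n S$ and lift it to a tuple $(q_1,\ldots,q_n)\in S^n$ with finite support $\{p_1,\ldots,p_k\}$. Since $\Bs|\omega_S|=\emptyset$ the canonical system $|\omega_S|$ is base-point-free, so for each $p_i$ the locus of canonical sections non-vanishing at $p_i$ is dense open in $\HH^0(S,\omega_S)$; intersecting these finitely many dense opens yields a single $s\in\HH^0(S,\omega_S)$ with $s(p_i)\neq 0$ for all $i$. The diagonal section $s^{\boxtimes n}=\bigotimes_i\mathrm{pr}_i^*s$ is visibly $\symmetric_n$-invariant, hence descends to $\sigma\in\HH^0(\Sym^n S,\omega_{\Sym^n S})$ with $q^*\sigma=s^{\boxtimes n}$. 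Its value at the chosen lift is $\prod_j s(q_j)\neq 0$, and since a vanishing of $\sigma$ at $y$ would pull back to a vanishing of $s^{\boxtimes n}$ at the lift, we conclude $y\notin\Bs|\omega_{\Sym^n S}|$. As $y$ was arbitrary, $\Bs|\omega_{\Sym^n S}|=\emptyset$, and the reduction of the first paragraph finishes the proof.

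The geometric heart --- extracting one canonical section of $S$ avoiding a prescribed finite set --- is immediate once $|\omega_S|$ is base-point-free. I expect the main obstacle to be the bookkeeping of the two canonical-bundle identifications: confirming that $\rho$ is crepant with $\omega_{\hilbn{n}{S}}\cong\rho^*\omega_{\Sym^n S}$, that the pseudo-reflection-free action of $\symmetric_n$ produces no discrepancy so that $q^*\omega_{\Sym^n S}\cong\omega_{S^n}$, and that non-vanishing descends correctly across the singular (repeated-point) loci of $\Sym^n S$, where the line-bundle hypothesis on $\omega_{\Sym^n S}$ is what keeps the argument honest.
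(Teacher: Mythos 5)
Your proposal is correct and follows essentially the same route as the paper's own proof: reduce to $\Sym^nS$ via the crepant Hilbert--Chow morphism and the pullback isomorphism on canonical sections, then produce a non-vanishing invariant section $s^{\boxtimes n}$ on $S^n$ from a general $s\in\HH^0(S,\omega_S)$ avoiding the finitely many points of a lift. The extra details you supply (absence of pseudo-reflections, the sign-free $\symmetric_n$-action on $\omega_S^{\boxtimes n}$) are exactly the points the paper records in the remark following its proof, so nothing is missing.
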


\begin{proof}
  The Hilbert--Chow morphism
  \begin{equation}
    \pi\colon\hilbn{n}{S}\to\Sym^nS
  \end{equation}
  is a crepant resolution of singularities, and~$\Sym^nS$ is Gorenstein.
  Hence there is an isomorphism of line bundles~$\omega_{\hilbn{n}{S}}\cong\pi^*\omega_{\Sym^nS}$,
  and combined with the projection formula and the fact that~$\pi_*\mathcal{O}_{\hilbn{n}{S}}\cong \mathcal{O}_{\Sym^nS}$,
  it induces the natural isomorphism
  \begin{equation}
    \pi^*\colon
    \HH^0(\Sym^nS,\omega_{\Sym^nS})
   \to
    \HH^0(\hilbn{n}{S},\omega_{\hilbn{n}{S}}).
  \end{equation}
  This in particular implies the following identification of closed subschemes of~$\hilbn{n}{S}$
  \begin{equation}\label{eqn:PB_BS}
    \pi^{- 1}(\Bs|\omega_{\Sym^nS}|)
    =
    \Bs|\omega_{\hilbn{n}{S}}|.
  \end{equation}
  To compute the canonical base locus of~$\Sym^nS$ we can use the identification
  \begin{equation}
    \HH^0(\Sym^nS,\omega_{\Sym^nS})
    \cong
    \HH^0(S^n,\omega_{S^n})^{\symmetric_n}
  \end{equation}
  induced by the isomorphism of line bundles~$q^*\omega_{\Sym^nS}\cong\omega_{S^n}$, where~$(-)^{\symmetric_n}$ are the~$\symmetric_n$\dash invariant sections, and~$q\colon S^n\to\Sym^nS$ is the quotient map. For each section~$t \in \HH^0(\Sym^nS,\omega_{\Sym^nS})$, the equality~$q^{-1}(\mathrm{V}(t))=\mathrm{V}(q^*(t))$ of closed subschemes of~$S^n$ holds.

  By assumption~$\Bs|\omega_S|=\emptyset$. Let~$z\in\Sym^nS$, and let~$(p_1,\ldots,p_n) \in S^n$ be a point such that~$q(p_1,\ldots,p_n) = z$. A general section~$s\in\HH^0(S,\omega_S)$ avoids~$p_1,\ldots,p_n$ hence, if~$\pi_i\colon S^n\to S$ denotes the projection onto the $i$th factor, the~$\symmetric_n$\dash invariant section $s\otimes\cdots\otimes s=\pi_1^*(s)\otimes\cdots\otimes\pi_n^*(s)$ of~$\omega_{S^n}$ is non-zero at~$(p_1,\ldots,p_n)$. This implies that the corresponding global section of~$\omega_{\Sym^nS}$ does not vanish at~$z$. Hence~$\Bs|\omega_{\Sym^nS}|=\emptyset$, and we are done by \eqref{eqn:PB_BS}.
\end{proof}

\begin{remark}
  This result is to be contrasted with the behaviour of the canonical base locus for symmetric powers of curves, where the power~$n$ plays an important role. The difference between the two cases is that the action of the symmetric group on~$C^n$ has as its non-free locus the big diagonal, which has codimension~1. Hence the quotient morphism~$C^n\to\Sym^nC$ has a ramification divisor. On the other hand, the action on~$S^n$ gives rise to a quotient morphism~$S^n\to\Sym^nS$ which is \'etale in codimension~1, as the non-free locus has codimension~2.
\end{remark}

The description of the canonical base locus in \Cref{proposition:canonical-base-locus-hilbert-schemes} shows indecomposability of the derived category of the Hilbert scheme $\hilbn{n}{S}$ for any surface~$S$ with~$\Bs|\omega_S|=\emptyset$.
There are several classes of surfaces of general type for which this is the case.

When instead $\Bs|\omega_S|\neq \emptyset$, we can try to understand indecomposability of~$\derived^\bounded(\hilbn{n}{S})$
by combining \Cref{proposition:canonical-base-locus-hilbert-schemes} and \Cref{corollary:zero-dim-Bs},
and arguing by degeneration.
Namely, suppose we have a smooth family $f\colon \mathcal{X}\to T$ of projective surfaces such that $f^{-1}(t_0)=S$
and $\Bs|\omega_{f^{-1}(t_1)}|= \emptyset$ for some $t_0,t_1\in T$, and let $f_n\colon \hilbn{n}{\mathcal{X}}\to T$ be the relative Hilbert scheme.
Then \Cref{proposition:canonical-base-locus-hilbert-schemes} ensures that the canonical base locus of the fibre $f_n^{-1}(t_1)$ is empty,
and we conclude by \Cref{corollary:zero-dim-Bs} that $f_n^{-1}(t)$ has indecomposable derived category for any $t\in T$.
One instance of this is given by the following.



\begin{proposition}
\label{proposition:hilbert-horikawa}
  Let~$S$ be a Horikawa surface in one of the following cases:
  \begin{itemize}
    \item for $\mathrm{c}_1^2=2\pg-4$ and~$\pg\geq 3$;
    \item for $\mathrm{c}_1^2=2\pg-3$ and~$\pg=3$;
    \item for $\mathrm{c}_1^2=2\pg-2$ and either $\pg=3$, or~$\pg=4$ and types~$\mathrm{I}, \mathrm{II}, \mathrm{III}_b$, $\mathrm{IV}_{b\mhyphen1}$ or~$\mathrm{V}_2$.
  \end{itemize}
  Then for all~$n\geq 2$
  and assuming \cref{conjecture:ntsod}
  we have that~$\derived^\bounded(\hilbn{n}{S})$ is indecomposable.
\end{proposition}

\begin{proof}
  For~$\mathrm{c}_1^2=2\pg-4$ the assertion follows from \Cref{proposition:canonical-base-locus-hilbert-schemes} and \Cref{theorem:kawatani-okawa}.

  For~$\mathrm{c}_1^2=2\pg-3$ we apply \Cref{corollary:zero-dim-Bs} to the family the Hilbert schemes associated to a family of Horikawa surfaces whose generic member is of type~I, whose canonical base locus is empty.

  For~$\mathrm{c}_1^2=2\pg-2$ and~$\pg=3$ we use that surfaces of type~I have empty canonical base locus, so that \Cref{corollary:zero-dim-Bs} can be applied to a family whose generic member is of this type. The same argument works for~$\pg=4$, by inspecting the degeneration diagrams \eqref{equation:horikawa-III-1}, \eqref{equation:horikawa-III-2}.
\end{proof}

The `types' mentioned in the previous statement, as well as their relationships in terms of degeneration, are recalled in \Cref{appendix:horikawa}.

\begin{remark}
In the light of \Cref{remark:ciliberto-empty}, we may argue analogously in order to achieve indecomposability of~$\derived^\bounded(\hilbn{n}{S_h})$, when $S_h$ is a double covering of $\mathbb{P}^2$ as in \Cref{subsection:ciliberto}.
\end{remark}


\appendix

\section{On Horikawa surfaces}
\label{appendix:horikawa}
In this appendix we summarise the classification of Horikawa surfaces, the description of their canonical base loci, and their behaviour under degeneration.
This is used to show that all Horikawa surfaces have indecomposable derived category (\Cref{theorem:horikawa-indecomposability}), and to deduce indecomposability for Hilbert schemes of points on some classes of Horikawa surfaces (\Cref{proposition:hilbert-horikawa}).

\textbf{The case $\mathrm{c}_1^2=2\pg-4$.}
For surfaces \emph{on} the Noether line, we have by \cite[Lemma~1.1]{MR0424831} that the canonical base locus~$\Bs|\omega_S|$ is empty.

\textbf{The case $\mathrm{c}_1^2=2\pg-3$.}
The canonical base locus now depends on the value of~$\pg\geq 2$, and its description is summarised as follows:
\begin{itemize}
  \item if~$\pg=2$, then as in the proof of \cite[Lemma~2.1]{MR0460340} the canonical base locus consists of a single point;
  \item if~$\pg=3$, then by \cite[Theorem~2.3]{MR0460340} the canonical base locus is either empty (type~I) or consists of a single point (type~II), and by \cite[Lemma~5.3]{MR0460340} we know that every surface of type~II can be obtained as deformation of surfaces of type~I;
  \item if~$\pg=4$, then by \cite[Theorem~1]{MR1573789} the canonical base locus is either empty or consists of a single point;
  \item if~$\pg\geq 5$, then by \cite[Theorem~1.3]{MR0460340} the canonical base locus consists of a single point.
\end{itemize}

For use in \Cref{section:hilbert-schemes} we highlight the behaviour for~$\pg=3$, where we have a deformation from empty canonical base locus to non-empty canonical base locus.

\textbf{The case $\mathrm{c}_1^2=2\pg-2$.}
This is the most interesting case for our purposes. We do not give an exhaustive description, but rather focus on the cases~$\pg=2$ and~$\pg=4$ because the former appears in \cite[Example~4.6]{1508.00682v2} and the latter features all interesting behaviour for the canonical base locus under deformation. If~$A$ and~$B$ denote classes of surfaces, then~$A\leadsto B$ means that any surface of type $B$ is the special member of some smooth family of surfaces of type $A$.
\begin{itemize}
  \item If~$\pg=2$, there are three types, I, II and III. By the discussion in \cite[\S1]{MR0517773} the canonical base locus is either at most two points (type~I or~II) or a rational curve of self-intersection~$-2$ (type~III).

    By \cite[Theorem~1.5(ii)]{MR0517773} we know that every surface of type~III can be obtained as deformation of surfaces of type~II, which in turn are obtained from surfaces of type~I.

  \item If~$\pg=4$, there exist several types depending on the behaviour of the canonical morphism and its indeterminacy locus, which we have summarised in the following two diagrams. These combine \cite[Theorems~4.1, 6.1 and~6.2]{MR0501370} and \cite[Theorem~0.1]{MR2244379}. The canonical base locus can be empty, consist of two distinct points, consist of a double point or have fixed part~$F$ which is a rational curve of self-intersection~$-2$.

    \begin{figure}[ht]
      \begin{equation}
        \label{equation:horikawa-III-1}
        \begin{tikzcd}
          \mathrm{I}_a\colon\Bs=\emptyset \arrow[dd, squiggly] \arrow[rd, squiggly] & \mathrm{IV}_{a\mhyphen1}\colon\#\Bs=2 \arrow[d, squiggly] \arrow[rd, squiggly] \\
          & \mathrm{IV}_{b\mhyphen1}\colon\#\Bs=2 & \mathrm{IV}_{a\mhyphen2}\colon\Bs=F,F^2=-2 \arrow[d, squiggly] \\
          \mathrm{I}_b\colon\Bs=\emptyset & & \mathrm{IV}_{b\mhyphen2}\colon\Bs=F,F^2=-2
        \end{tikzcd}
      \end{equation}

      \begin{equation}
        \label{equation:horikawa-III-2}
        \begin{tikzcd}
          & \mathrm{III}_a\colon\#\Bs=2 \arrow[d, squiggly] \arrow[rd, squiggly] \\
          \mathrm{II}\colon\Bs=\emptyset \arrow[r, squiggly] & \mathrm{III}_b\colon\Bs=k[\epsilon]/(\epsilon^2) \arrow[rd, squiggly] & \mathrm{V}_1\colon\#\Bs=2 \arrow[d, squiggly] \\
          & & \mathrm{V}_2\colon\Bs=F,F^2=-2
        \end{tikzcd}
      \end{equation}
      \caption{Degeneration patterns for base loci of Horikawa surfaces}
    \end{figure}
\end{itemize}
For~$\pg=3$ the analysis started in \cite[\S2]{MR0517773} is completed in \cite{reid,dicks}, giving rise to~4~types of canonical base locus. Using the notation from \cite[\S4]{reid}, type~I has empty canonical base locus, type II a single point, type III two distinct points and type~$\mathrm{III}_a$ a rational curve~$F$ with~$F^2=-2$. There exist degenerations~$\mathrm{I}\leadsto\mathrm{II}$ and~$\mathrm{I}\leadsto\mathrm{III}\leadsto\mathrm{III}_a$. This suffices to perform an analysis similar to the one for~$\pg=4$. For~$\pg\geq5$ one is referred to \cite[\S5, \S6]{MR0517773} for the proofs that the canonical base locus is either finite, or has fixed part a rational curve of self-intersection~$-2$, and the degeneration hierarchies.

In \Cref{section:hilbert-schemes} the deformation (for~$\pg=4$) of type~II into type~$\mathrm{III}_b$ (obtained in \cite[Theorem~0.1]{MR2244379}) and of type~$\mathrm{I}_a$ into type~$\mathrm{IV}_{b\mhyphen1}$, deforming from empty base locus to non-empty base locus, will be used.

\section*{Acknowledgements}

We are grateful to Ciro Ciliberto for helpful discussions and for patiently explaining some details of the examples we described in \Cref{subsection:ciliberto}.
We would also like to thank Michele Graffeo, Kazuhiro Konno and Filippo Viviani for useful and interesting discussions, and Najmuddin Fakhruddin for \Cref{remark:Fakhruddin}.

Finally we want to thank the Max Planck Institute for Mathematics, the University of Bonn, the Tata Institute for Mathematics, and SISSA for the pleasant working conditions.

\bibliographystyle{amsplain}
\bibliography{clean.bib}

\end{document}